\newtheorem{theorem}{Theorem}[section]
\newtheorem{remark}{Remark}[section]
\newtheorem{lemma}{Lemma}[section]
\newtheorem{definition}{Definition}[section]
\newtheorem{corollary}{Corollary}[section]
\title{Global existence and decay estimates for a viscoelastic plate equation with nonlinear damping and logarithmic nonlinearity}
\date{}
\author{Bhargav Kumar Kakumani \footnote{bhargav@hyderabad.bits-pilani.ac.in} \footnote{Department of Mathematics, BITS-Pilani, Hyderabad Campus, Hyderabad, India.}  , Suman Prabha Yadav \footnote{p20200454@hyderabad.bits-pilani.ac.in} \footnote{Department of Mathematics, BITS-Pilani, Hyderabad Campus, Hyderabad, India.}  
}
\begin{document}
	
	\maketitle
	\pagestyle{plain}
	\pagenumbering{arabic}

	\begin{abstract}
	In this article, we consider a viscoelastic plate equation with a logarithmic nonlinearity in the presence of nonlinear frictional damping term. Using the the Faedo-Galerkin method we establish the global existence of the solution of the problem and we also prove few general decay rate results.
	\end{abstract}
	
	\noindent\textbf{Keywords:}
	Viscoelasticity, Global existence, Decay estimates, Convexity, Logarithmic nonlinearity. \medskip \\
	\noindent\textbf{ AMS Subject Classification 2010:} 35A01, 35L55, 74D10, 93D20.

\section{Introduction}
This work deals with the existence and decay of solutions to the following plate problem:

\begin{equation}
\label{eqn_s1}
\left\{ \begin{array}{ll}
\left |u_{t} \right |^{\rho }u_{tt}+\Delta ^{2}u+\Delta ^{2}u_{tt}+u-\displaystyle\int_{0}^{t}b(t-s)\Delta ^{2}u(s)ds \medskip \\
\hspace{4cm} +h(u_{t})=ku \ln \left | u \right |, \hspace{0.5cm}  (x,t) \in \Omega \times(0,\infty), \medskip \\
u\left ( x,t \right )=\frac{\partial u}{\partial \nu}\left ( x,t \right )=0, \hspace{0.5cm} \text{in} \hspace{0.2cm} \, \partial \Omega \times(0,\infty),  \medskip\\
u(x,0)=u_{0}(x), \hspace{0.5cm}  u_{t}(x,0)=u_{1}(x) \hspace{0.5cm} \textrm{in} \hspace{0.2cm}\, \Omega
\end {array} \right.
\end{equation}
where $\Omega \subset \mathbb{R}^2$ is a bounded domain, $\nu$ is the unit outer normal to $\partial \Omega$, $k$ and $\rho $ are positive constants and $0 < \rho \leq \frac{2}{n-2}$ if $n \geq 3 $. We use the Lebesgue space $L^2(\Omega)$ and $H_{0}^{2}(\Omega)$ with their usual scalar product and norms. Through out this paper, we consider the following hypotheses:\\

\begin{enumerate}[label=(H\arabic*)]
	\label{eqn_s2}
	\item Let $b:\mathbb{R}^{+}\rightarrow \mathbb{R}^{+}$
	is a $C^1$-nonincreasing function satisfying
	\begin{equation}
	b(0)>0,\, 1-\int_{0}^{t}b(\tau)d\tau=l>0.
	\end{equation} \medskip 
	
	\label{eqn_s3}
	\item Assume that there exist a nonincreasing positive differentiable function $\xi$ such that   $\xi(0)>0$ and $\xi:\mathbb{R}^{+}\rightarrow \mathbb{R}^{+}$. Further assume that there exists a $C^1$ function $B:(0,\infty )\rightarrow (0,\infty )$ which is linear or strictly convex $C^2$ function and strictly increasing on $(0,r_1],\ r_1\leq b(0)$, $B(0)=B'(0)=0 $ and $B$ satisfies 
	\begin{equation}
	b'(t)\leq -\xi(t)B(b(t)), \quad \forall t\geq 0.
	\end{equation} \medskip 
	
	\item Let $h :\mathbb{R} \rightarrow \mathbb{R}$ is a nondecreasing continuous function such that there exist a $h_{1}\in C^{1}(\mathbb{R}^{+})$ with $ h_{1}(0)=0 $ which is strictly increasing function and $h_1$ satisfies
	
	\begin{center}
		$h_{1}(\left | s \right |) \leq \left | h(s) \right | \leq h_{1}^{-1}(\left |s \right | )$,\ $\forall$  $\left | s \right | \leq  \epsilon,$ \\
		
		$c_{1}\left | s \right |\leq \left | h(s) \right |  \leq c_{2} \left | s \right |$,\ $\forall$ $\left | s \right | \geq  \epsilon$,
	\end{center}
	where $c_1,c_2,\epsilon$ are positive constants. Moreover, define $H$ to be a strictly convex $ C^{2} $ function in $ (0,r_2]$ for some $ r_2 \geq 0 $ such that $ H(s) = \sqrt{s}h_{1}(\sqrt{s}) $ when $ h_{1} $ is nonlinear. \medskip
	
	\label{eqn_s4}
	\item The constant $k$ in (\ref{eqn_s1}) is such that
	\begin{equation}
	0< k< k_{0}=\frac{ 2\pi le^{3}} {c_{p}},
	\end{equation}
	where $c_{p}$ is the smallest positive number satisfying
	\label{eqn_s5}
	\begin{equation}
	\left \| \nabla u \right \|_{2}^{2}\leq c_{p}\left \| \Delta u \right \|_{2}^{2},\,  u\in H_{0}^{2}(\Omega),
	\end{equation}
	where $\left \| . \right \|_{2}=\left \| . \right \|_{L^{2}(\Omega)}.$
\end{enumerate}
Throughout this article, we use $c$ to denote a generic positive constant.
\begin{remark}
	Hypothesis $(H3)$ implies that $\tau h(\tau)>0,\ \forall \tau>0.$  
\end{remark}

\begin{remark}
	If $B$ is a strictly convex $C^2$ function and strictly increasing on $(0,r]$ for some $r>0$, then we can extend $B$ to $\bar{B}.$ Moreover, $\bar{B} $ is also a strictly convex $C^2$ function and Strictly increasing on $(0,\infty)$ (see \cite{Mohammad_2019}). Similarly, we denote the extension of $H$ to be $\bar{H}$.
\end{remark}

\noindent Plate problems have been broadly explored by mathematicians and other scientists. This type of problems have a lot of applications in different areas of science and engineering such as material engineering, mechanical engineering, nuclear physics and optics. \\

Let us discuss some work related to the plate problems. In \cite{Benaissa_2006}, the authors treated the following problem

\begin{equation*}
\left\{\begin{array}{ll}
u_{tt}-\Delta u+\alpha(t)h(u_{t})=0, \hspace{0.5cm} \quad (x,t) \in \, \Omega\times(0,\infty)\\ 
u=0, \hspace{0.5cm} \textrm{on} \hspace{0.2cm} \, \partial\Omega\times(0,\infty),
\end{array}\right.
\end{equation*}

\noindent where $h$ is a function having a polynomial growth near the origin, and they have established few energy decay results. Decay results for arbitrary growth of the damping term have been considered and studied for the first time in the work of Lasiecka and Tataru (see \cite{Lasiecka_1993}). They have found that the energy decays as fast as the solution of an associated differential equation whose coefficients depend on the damping term. In \cite{Liu_2009}, Liu considered the following problem
\begin{equation*}
\left\{\begin{array}{ll}
\left |u_{t} \right |^{\rho }u_{tt}-\Delta u-\Delta u_{tt}-\int_{0}^{t}g(t-s)\Delta u(s) d s+\alpha(t) h(u_{t})=b\left | u \right |^{\rho-2}u,\medskip \\
\hspace{9cm}  {\rm in\ } \Omega \times(0,\infty), \medskip \\
u\left ( x,t \right )=0, \hspace{0.5cm} \textrm{on} \hspace{0.2cm} \, \partial \Omega \times(0,\infty), \medskip  \\
u(x,0)=u_{0}(x), \hspace{0.5cm}  u_{t}(x,0)=u_{1}(x) \hspace{0.5cm} \textrm{on} \hspace{0.2cm}\, \Omega,
\end{array} \right.
\end{equation*}

\noindent and they have proved a general decay result that depends on the behavior of $g,\alpha$ and $h$ without imposing any restrictive growth assumption on the damping term at origin. For more results in the direction of the plate problems, see \cite{di_shang_bvp, lasi_1992, lagnese_1989_book_plates, lagnese_book1989, M_rivera_1996, komornik_1994, Messaoudi_2002, ChenW_2009} and the references there in. \\

Now, let us review some work with a  logarithmic term that are related to the problem (\ref{eqn_s1}). Cazenave and Haraux \cite{caze_1980} studied the following problem:
\begin{equation}
\label{eqn_s6}
u_{tt}-\Delta u=u \ln |u|^{k}, \hspace{0.5cm} \textrm{in} \, \mathbb{R}^{3}
\end{equation}
and established the existence and uniqueness of the solution for the Cauchy problem. Gorka \cite{Gorka_2009} obtained the global existence of weak solutions in the one-dimensional case by using compactness arguments, for all
$(u_{0}, u_{1}) \in \, H_{0}^{1}([a,b]) \times L^{2}([a,b]),$
to the initial-boundary value problem (\ref{eqn_s6}). The authors in \cite{Bartkowski_2008} considered the one dimensional Cauchy problem for equation (\ref{eqn_s6}) and they have proved the existence of classical solutions and also they have investigated the weak solutions. Birula and Mycielski \cite{Bi-Bi_1975, Bi-Bi_1976} considered

\begin{equation*}
\left\{\begin{array}{ll}
u_{tt}-u_{xx}+u-\epsilon u \ln \left | u \right |^{2}=0 \, \hspace{0.5cm} \quad (x,t) \in [a,b] \times (0,T),  \medskip
\\
u\left ( a,t \right )=u\left ( b,t \right )=0, \hspace{0.5cm} \quad t \in \, (0,T), \medskip
\\
u(x,0)=u_{0}(x), \hspace{0.5cm}  u_{t}(x,0)=u_{1}(x) \hspace{0.5cm} \quad x \in \, [a,b] 
\end {array} \right.
\end{equation*}

\noindent which is a relativistic version of logarithmic quantum mechanics. Moreover, it can also be obtained for the p-adic string equation by taking the limit as $p \rightarrow 1$ (see \cite{Gorka_2011, Vladimirov_2005}). Mohammad M. Al-Gharabli \cite{Mohammad_2018-1} considered equation \eqref{eqn_s1} with out damping term and they have established the existence of solution and proved the decay rates and stability result. Mohammad M. Al-Gharabli $et.al.$ (in \cite{mohd_2021}) have  considered the viscoelastic problem with variable exponent and logarithmic nonlinearities:
\begin{equation*}
u_{tt}-\Delta u + u +\int_{0}^{t}b(t-s)\Delta u(s) d s+\left | u_{t} \right |^{\gamma (\cdot )-2}u_{t}=u\ln \left | u \right |^{\alpha }	
\end{equation*}
\noindent and they have established a global existence result using the
well-depth method and then they have also established explicit and general decay results under a wide class of relaxation functions. Gongwei Liu (\cite{liu_2020}) considered the differential equation
\begin{equation*}
u_{tt}+\Delta ^{2}u+\left | u_{t} \right |^{m-2}u_{t}=u\left | u \right |^{p-2} \log\left | u \right |^{k} \hspace{0.5cm} (x,t) \in \Omega \times \mathbb{R}^{+} 
\end{equation*}
\noindent with the bounday conditions given in \eqref{eqn_s1}. They have established the local existence result by the fixed point techniques. The global existence and decay estimate of the solution at sub-critical initial energy is obtained, and they additionally prove that the solution with negative initial energy blows up in finite time under some suitable conditions. Moreover, they find out the blow-up in finite time of solution at the arbitrarily high initial energy for linear damping ($i.e.,$ $m = 2$). In \cite{adel_bvp_2020}, Adel M. Al-Mahdi considered viscoelastic plate equation with infinite memory and logarithmic nonlinearity:
\begin{equation*}
\left\{ \begin{array}{ll}
\left |u_{t} \right |^{\rho }u_{tt}+\Delta ^{2}u+\Delta ^{2}u_{tt}+u-\int_{0}^{\infty}b(s)\Delta ^{2}u(t-s)ds=\alpha u \ln \left | u \right |,\medskip \\
\hspace{9cm}  {\rm in\ } \Omega \times(0,\infty), \medskip \\
u\left ( x,t \right )=\frac{\partial u}{\partial \nu}\left ( x,t \right )=0, \hspace{0.5cm} \text{in} \hspace{0.2cm} \, \partial \Omega \times(0,\infty), \medskip \\
u(x,-t)=u_{0}(x), \hspace{0.5cm}  u_{t}(x,0)=u_{1}(x) \hspace{0.5cm} \textrm{in} \hspace{0.2cm}\, \Omega.
\end {array} \right.
\end{equation*}
By  imposing minimal conditions on the relaxation function the authors in \cite{adel_bvp_2020} established an explicit and general decay rate results. See \cite{mohd_bvp_2019, kafini_salim_2018} (and the references there in) for more results in this direction.\\

\noindent In this article, we are engaged with the global existence and stability of the plate problem (\ref{eqn_s1}) with kernels $b$ having an arbitrary growth at infinity. This article organized as follows. In Section $2$, we establish the local existence of the solutions to the problem (\ref{eqn_s1}). The global existence is proved in Section $3$. Finally, in the last section we derive few stability results.

\section{Local existence}

In this section, we state and prove the local existence result for the problem (1). The energy associated with problem (1) is

\begin{equation}
\begin{array}{ll}
\label{E_fn}
E(t)& =\frac{1}{\rho +2}\left \| u_{t} \right \|_{\rho +2}^{\rho +2}+\frac{1}{2}\Big[(1-\int_{0}^{t}b(s) ds)\left \| \Delta u \right \|_{2}^{2}+\left \| \Delta u_{t} \right \|_{2}^{2}\medskip \\
& \hspace{1cm} -k\int_{\Omega }u^{2} \ln \left | u \right |\mathrm{d}x + \left \| u \right \|_2^2 + b\circ\Delta u\Big]+ \frac{k}{4}\left \| u \right \|_{2}^{2}.
\end{array}
\end{equation}

\noindent where the product $\circ$ is defined by

\begin{center}
	$(b\circ\Delta u)(t)=\int_{0}^{t}b(t-s)\left \| \Delta u(s)-\Delta u(t) \right \|_{2}^{2}ds.$
\end{center}
Direct differentiation of (\ref{E_fn}) with respect to $t$ and using (\ref{eqn_s1}) we observe that 
\begin{equation}
\begin{array}{ll}
\label{DE_fn}
E'(t)& =\frac{1}{2}(b'\circ\Delta u)(t)-\frac{1}{2}b(t)\left \| \Delta u \right \|_{2}^{2}-\int_{\Omega }u_{t}h(u_{t}) \medskip \\
& \leq \frac{1}{2}(b'\circ\Delta u)(t)-\int_{\Omega }u_{t}h(u_{t}) \medskip \\
& \leq 0.
\end{array}
\end{equation}

\begin{lemma}
	(Logarithmic Sobolev inequality) Let $u \in  H_{0}^{1}(\Omega) $ and $ a>0 $ be any number. Then 
	\begin{equation}
	\int_{\Omega}u^{2}\ln \left | u \right |dx\leq \frac{1}{2}\left \| u \right \|_{2}^{2} \ln\left \| u \right \|_{2}^{2}+\frac{a^{2}}{2\pi}\left \| \bigtriangledown u \right \|_2^{2}-(1+ \ln a)\left \| u \right \|_{2}^{2}
	\end{equation}
\end{lemma}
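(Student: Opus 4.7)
My plan is to reduce the parametric inequality to a dimensionless form by a scaling argument and then derive that form from the sharp Euclidean logarithmic Sobolev inequality in two dimensions. First, I would dispose of the trivial case $u\equiv 0$ and assume $\|u\|_2>0$. Extending $u$ by zero outside $\Omega$ we may regard it as an element of $H^{1}(\mathbb{R}^{2})$ without changing $\|u\|_2$, $\|\nabla u\|_2$, or $\int u^{2}\ln|u|\,dx$ (here and throughout we use the convention $0\ln 0=0$).

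Second, I would normalize. Setting $v=u/\|u\|_2$ one has $\|v\|_2=1$, and writing $u=\|u\|_2\,v$ together with $\ln|u|=\ln\|u\|_2+\ln|v|$ gives
$$\int_{\Omega} u^{2}\ln|u|\,dx=\|u\|_2^{2}\ln\|u\|_2+\|u\|_2^{2}\int_{\Omega} v^{2}\ln|v|\,dx.$$
Since $\tfrac{1}{2}\ln\|u\|_2^{2}=\ln\|u\|_2$, the claim is therefore equivalent, after dividing through by $\|u\|_2^{2}$, to the normalized inequality
$$\int_{\Omega} v^{2}\ln|v|\,dx\le \frac{a^{2}}{2\pi}\|\nabla v\|_2^{2}-(1+\ln a),\qquad \|v\|_2=1,$$
which should hold for every $a>0$.

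Third, I would invoke Weissler's sharp Euclidean log-Sobolev inequality in dimension two, which says that
$$\int_{\mathbb{R}^{2}} v^{2}\ln v^{2}\,dx\le \ln\!\left(\frac{\|\nabla v\|_2^{2}}{\pi e}\right)\qquad\text{whenever }\|v\|_2=1,$$
and combine it with the elementary bound $\ln x\le x/c-1+\ln c$ valid for all $x,c>0$ (the maximum of $\ln x-x/c$ occurs at $x=c$). Choosing $x=\|\nabla v\|_2^{2}/(\pi e)$ and $c=1/(a^{2}e)$ produces $\int v^{2}\ln v^{2}\,dx\le \frac{a^{2}}{\pi}\|\nabla v\|_2^{2}-2(1+\ln a)$; dividing by two yields exactly the normalized inequality, and undoing the normalization completes the proof.

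The single delicate ingredient is the appearance of the sharp constant $1/(2\pi)$ in two dimensions; this is dictated by Weissler's (equivalently Gross's, after the classical rescaling) sharp inequality, which I would cite as a standard fact rather than reprove. Everything else amounts to the algebraic scaling reduction $v=u/\|u\|_2$ plus the one-line Young-type step that introduces the free parameter $a$, so I expect no further technical obstacles.
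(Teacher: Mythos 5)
Your argument is correct. The reduction to the normalized case $\|v\|_2=1$ via $u=\|u\|_2 v$ and $\ln|u|=\ln\|u\|_2+\ln|v|$ is exact, the two-dimensional sharp Euclidean logarithmic Sobolev inequality $\int v^2\ln v^2\,dx\le \ln\bigl(\|\nabla v\|_2^2/(\pi e)\bigr)$ is correctly quoted, and the elementary bound $\ln x\le x/c-1+\ln c$ with $x=\|\nabla v\|_2^2/(\pi e)$, $c=1/(a^2e)$ gives precisely $\frac{a^2}{\pi}\|\nabla v\|_2^2-2(1+\ln a)$, which halves to the stated inequality. Note that the paper itself offers no proof of this lemma --- it is quoted as a standard fact (ultimately due to Gross and Weissler, and used in the same form in the references the authors rely on) --- so there is nothing to compare against; your derivation is the standard one by which this parametric form with the free constant $a$ is obtained from the sharp inequality, and it is complete modulo the citation of Weissler's result, which is reasonable to take as known.
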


\begin{corollary}
	Let $u \in  H_{0}^{2}(\Omega) $ and $ a>0 $ be any number. Then
	
	\begin{equation}
	\int_{\Omega}u^{2} \ln \left | u \right |dx\leq \frac{1}{2}\left \| u \right \|_{2}^{2}ln\left \| u \right \|_{2}^{2}+\frac{c_{p}a^{2}}{2\pi}\left \| \Delta  u \right \|_2^{2}-(1+ \ln a)\left \| u \right \|_{2}^{2}
	\end{equation} 
\end{corollary}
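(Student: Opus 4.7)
The plan is to observe that the corollary is a direct consequence of the preceding Logarithmic Sobolev inequality combined with the Poincaré-type estimate $\|\nabla u\|_{2}^{2}\le c_{p}\|\Delta u\|_{2}^{2}$ listed in hypothesis (H4). Since $\Omega\subset\mathbb{R}^{2}$ is bounded and $H_{0}^{2}(\Omega)\hookrightarrow H_{0}^{1}(\Omega)$, any $u\in H_{0}^{2}(\Omega)$ is admissible in the lemma.

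First I would fix $u\in H_{0}^{2}(\Omega)$ and any $a>0$, and simply invoke the Logarithmic Sobolev inequality, which gives
\begin{equation*}
\int_{\Omega}u^{2}\ln|u|\,dx \le \tfrac{1}{2}\|u\|_{2}^{2}\ln\|u\|_{2}^{2} + \tfrac{a^{2}}{2\pi}\|\nabla u\|_{2}^{2} - (1+\ln a)\|u\|_{2}^{2}.
\end{equation*}
Next I would bound the gradient term using the Poincaré-type inequality from hypothesis (H4), namely $\|\nabla u\|_{2}^{2}\le c_{p}\|\Delta u\|_{2}^{2}$, which is valid precisely because $u\in H_{0}^{2}(\Omega)$. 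Substituting this bound into the $\frac{a^{2}}{2\pi}\|\nabla u\|_{2}^{2}$ term yields the desired estimate with $\frac{c_{p}a^{2}}{2\pi}\|\Delta u\|_{2}^{2}$ in place of $\frac{a^{2}}{2\pi}\|\nabla u\|_{2}^{2}$.

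There is no real obstacle here: the corollary is essentially a two-line consequence of the lemma and the compatibility of the norms on $H_{0}^{2}(\Omega)$. The only thing worth noting for the reader is why the embedding $H_{0}^{2}(\Omega)\subset H_{0}^{1}(\Omega)$ legitimises using the Logarithmic Sobolev inequality in this setting, and why the constant $c_{p}$ appearing in (H4) is the correct one to close the argument.
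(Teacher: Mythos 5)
Your proposal is correct and matches the paper's (implicit) argument exactly: the corollary is obtained by applying the Logarithmic Sobolev inequality of the preceding lemma to $u\in H_{0}^{2}(\Omega)\subset H_{0}^{1}(\Omega)$ and then replacing $\|\nabla u\|_{2}^{2}$ by $c_{p}\|\Delta u\|_{2}^{2}$ via the inequality in hypothesis (H4). Nothing further is needed.
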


\begin{lemma}
	\label{eps_1}
	Let $ \epsilon _{0} \in (0,1), $ then there exists $ d_{\epsilon _{0}}>0 $ such that 
	\begin{equation}
	s\left | \ln s \right |\leq s^{2}+d_{\epsilon _{0}}s^{1-\epsilon _{0}}  \forall s> 0
	\end{equation}
\end{lemma}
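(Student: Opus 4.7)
The plan is to split the domain $s > 0$ into the two natural regimes $s \geq 1$ and $0 < s < 1$, in each of which $|\ln s|$ has a fixed sign, and in each regime dominate $s|\ln s|$ by exactly one of the two terms on the right-hand side.

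First, for $s \geq 1$ I would use the elementary bound $\ln s \leq s - 1 \leq s$, so $s|\ln s| = s\ln s \leq s^2$, and the inequality $s|\ln s| \leq s^2 + d_{\epsilon_0} s^{1-\epsilon_0}$ holds for \emph{any} $d_{\epsilon_0} \geq 0$. For $0 < s < 1$, we have $s|\ln s| = -s\ln s$. Since the $s^2$ term on the right is nonnegative, it suffices to show that $-s\ln s \leq d_{\epsilon_0}\, s^{1-\epsilon_0}$, equivalently that the function
\[
g(s) := -s^{\epsilon_0}\ln s
\]
is bounded above on $(0,1)$ by $d_{\epsilon_0}$.

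To conclude, I would observe that $g$ extends continuously to $[0,1]$: at the endpoints $g(1) = 0$ and $\lim_{s\to 0^+} s^{\epsilon_0}\ln s = 0$ (because $\epsilon_0 > 0$). Hence $g$ is continuous on a compact interval and attains a finite maximum. Differentiating gives $g'(s) = -s^{\epsilon_0-1}(1 + \epsilon_0 \ln s)$, which vanishes at $s_\star = e^{-1/\epsilon_0} \in (0,1)$; this is the unique critical point in the interior, and comparing with the zero values at the endpoints confirms it is a maximum. Evaluating produces $g(s_\star) = \tfrac{1}{e\epsilon_0}$, so one may take $d_{\epsilon_0} = \tfrac{1}{e\epsilon_0}$ (or any larger constant).

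There is no real obstacle here; the statement is an elementary calculus fact. The only point that deserves a moment's care is combining the two regimes into a single inequality valid for all $s > 0$, which works cleanly because both terms $s^2$ and $d_{\epsilon_0} s^{1-\epsilon_0}$ on the right are nonnegative, so each regime's bound is automatically preserved when the other term is reinstated.
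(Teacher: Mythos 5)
Your proof is correct and complete. The paper states this lemma without any proof (it is treated as a known elementary fact), so there is nothing to compare against; your argument — splitting at $s=1$, using $\ln s \leq s$ to absorb the large-$s$ regime into $s^2$, and maximizing $g(s) = -s^{\epsilon_0}\ln s$ on $(0,1)$ to obtain the explicit constant $d_{\epsilon_0} = \tfrac{1}{e\epsilon_0}$ — is the standard one and fills the gap the paper leaves open.
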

\begin{definition}
	A function 
	\begin{center}
		$u\in C^{1}([0,T],H_{0}^{2}(\Omega ))$
	\end{center}
	is called a weak solution of (\ref{eqn_s1}) on $[0,T]$ if, for any $t\in [0,T]$ and $\forall w\in H_{0}^{2}(\Omega)$, $u$ satisfies
	
	\begin{equation}
	\label{weak_sol}
	\left\{ \begin{array}{ll}
	\displaystyle\int_{\Omega}\!\left | u_{t}\right |^{\rho}u_{tt}(x,t)w(x) dx\!+\!\int_{\Omega}\!\!\Delta u(x,t)\Delta w(x) dx\!+\!\int_{\Omega}\!\! \Delta u_{tt}(x,t)\Delta w(x) dx \medskip \\
	\hspace{1cm} +\displaystyle\int_{\Omega} u(x,t)w(x) dx-\displaystyle\int_{\Omega}\Delta w(x)\int_{0}^{t}b(t-s)\Delta u(s) ds \medskip \\
	\hspace{1cm} +\displaystyle\int_{\Omega}h(u_{t}(x,t))w(x) dx= k\int_{\Omega} u(x,t)w(x) \ln(|u(x,t)|) dx, \\ \\
	u(x,0)=u_{0}(x), \hspace{0.5cm} u_{t}(x,0)=u_{1}(x).
	\end {array} \right.
	\end{equation}
	
\end{definition}

\noindent {\bf Theorem}
Assume that the hypothesis $(H1)-(H4)$ hold. Let $(u_{0},u_{1}) \in H_{0}^{2}(\Omega )\times H_{0}^{2}(\Omega)$. Then the problem (\ref{eqn_s1}) has weak solution on $[0,T]$.\\

\begin{proof}
	To prove the existence of a solution to the problem (\ref{eqn_s1}), we use the Faedo-Galerkin approximations. Let $({w_j})_{j=1}^{\infty}$ be an orthogonal basis of the separable space $H_{0}^{2}(\Omega)$. Let $V_{m}$ = span$({w_{1}, w_{2},....,w_{m}})$ and let the projections of the initial data on the finite dimensional subspace $V_{m}$ be given by 
	\begin{equation}
	u_{0}^{m}(x)=\sum_{j=1}^{m}\alpha_{j}w_{j}(x), \hspace{0.5cm}   u_{1}^{m}(x)=\sum_{j=1}^{m}\beta_{j}w_{j}(x).\\
	\end{equation}
	
	\noindent We search for an approximation solution
	\begin{equation}
	u_{0}^{m}(x)=\sum_{j=1}^{m}g_{j}^{m}(t)w_{j}(x),
	\end{equation} 
	of the approximate problem in $V_{m}:$
	\begin{multline}
	\left\{\begin{array}{ll}
	\label{KE_fn}
	\displaystyle\int_{\Omega}\Big[\left | u_{t}^{m} \right |^{\rho}u_{tt}^{m}w+\Delta u^{m}\Delta w+\Delta u_{tt}^{m} \Delta w+u^{m}w+h(u_{t}^{m})w \medskip \\
	\hspace{0.5cm}-\displaystyle\int_{0}^{t}b(t-s)\Delta u^{m}(s)\Delta wds \Big] dx=k\int_{\Omega}wu^{m} \ln \left | u^{m} \right |dx, \hspace{0.1cm} \forall w \in V_{m},\medskip \\	u^{m}(0):=u_{0}^{m}=\sum_{j=1}^{m}(u_{0},w_{j})w_{j}, \medskip \\ u_{t}^{m}(0):=u_{1}^{m}=\sum_{j=1}^{m}(u_{1},w_{j})w_{j}.
	\end{array} \right .
	\end{multline}
	
	\noindent This gives a system of ordinary differential equation (ODE's) for the unknown functions $g_{j}^{m}(t)$. Using the standard existence theory for ODE's, one can obtain functions
	\begin{center}
		$g_{j}:[0,t_{m}) \rightarrow \mathbb{R},  j=1,2,....m,$
	\end{center}
	\noindent which satisfy (\ref{KE_fn}) in a maximal interval $[0,t_{m}), t_{m} \in (0,T]$. Later, we show that $ t_{m}=T $ and the local solution is uniformly bounded which is independent of $m$ and $t$. To do this, substitute $w=u_{t}^{m}$ in (\ref{KE_fn}) and using integration by parts to obtain 
	\begin{equation}
	\label{KE_fn1}
	\frac{d }{dt}E^{m}(t)\leq \frac{1}{2}(b'o\Delta u^{m})-\int_{\Omega}u_{t}^{m}h(u_{t}^{m}) dx\leq 0,
	\end{equation}
	where
	\begin{multline}
	E^{m}(t)=\frac{1}{\rho +2}\left \| u_{t}^{m} \right \|_{\rho +2}^{\rho +2}+\frac{1}{2}((1-\int_{0}^{t}b(s)ds)\left \| \Delta u^{m} \right \|_{2}^{2}+\left \| \Delta u_{t}^{m} \right \|_{2}^{2}
	\\
	-k\int_{\Omega }\left |u^{m}  \right |^{2} \ln \left | u^{m} \right | dx)+\frac{k+2}{4}\left \| u^{m} \right \|_{2}^{2}+\frac{1}{2}(b\circ\Delta u^{m}),
	\end{multline}
	
	\noindent from (\ref{KE_fn1}), we have 
	\begin{center}
		$E^{m}(t) \leq E^{m}(0), \forall t\geq 0.$ 
	\end{center}
	
	\noindent the logarithmic Sobolev inequality together with last inequality, we observe that 
	
	\begin{multline}
	\| u_t^m \|^{\rho+2}_{\rho+2} + \| \Delta u_t^m \|_2^2 + \left( l- \frac{k a^2c_p}{2\pi} \right) \|\Delta u^m \|^2_2\\  + \left[ \frac{k+2}{2} + k (1+  \ln a) \right] \| u^m \|^2_2  + b\circ \Delta u^m 
	\leq 2E^m(0)+ \| u^m \|^2_2 \ln \| u^m \|^2_2,
	\end{multline}
	
	\noindent Choose $a$ such that 
	
	\begin{equation}
	e^{{-3}/{2}} < a < \sqrt{\frac{2\pi l}{kc_p}},
	\end{equation}
	
	\noindent then a satisfies
	
	\begin{equation}
	\label{assmup_a}
	l-\frac{ka^2 c_p}{2\pi} > 0,
	\end{equation} 
	
	\noindent and 
	\begin{equation}
	\frac{k+2}{2}+ k (1+ \ln a) > 0.
	\end{equation} 
	
	\noindent So, we obtain
	\begin{multline}
	\label{KE_fn2}
	\| u_t^m \|^{\rho+2}_{\rho+2} + \| \Delta u_t^m \|_2^2 + \| \Delta u^m \|_2^2 + \| u^m \|_2^2 \\ +  b\circ \Delta u^m 
	\leq c \left( 1+ \| u^m \|^2_2  \ln \| u^m \|^2_2  \right)
	\end{multline}
	\noindent And we know that 
	\begin{center}
		$u^{m}(.,t)=u^{m}(.,0)+\int_{0}^{t}\frac{\partial u^{m}}{\partial s}(.,s) ds.$
	\end{center}	
	\noindent Using Cauchy Schwarz inequality, observe that
	\begin{multline}
	\left \| u^{m}(t) \right \|_{2}^{2}\leq 2\left \| u^{m}(0) \right \|_{2}^{2}+2\left \| \int_{0}^{t}\frac{\partial u^{m}}{\partial s}(s)ds \right \|_{2}^{2} \\ \leq 2\left \| u^{m}(0)\right \|_2+2T\int_{0}^{t}\left \| u_{t}^{m}(s) \right \|_{2}^{2} ds,
	\end{multline}
	\noindent therefore inequality (\ref{KE_fn2}) gives 
	\begin{equation}
	\label{KE_fn3}
	\| u^m \|^2_2 \leq 2 \| u^m(0) \|^2_2 + 2Tc \left( 1+ \int_0^t  \| u^m \|^2_2  \ln \| u^m \|^2_2 ds \right),
	\end{equation}
	\noindent if we substitute $c_1$ = max\{${2Tc,\ 2 \| u(0) \|^2_2}\},$ then (\ref{KE_fn3}) leads to
	\begin{equation*}
	\| u^m \|^2_2 \leq  2c_1 \left( 1+ \int_0^t  \| u^m \|^2_2  \ln (\| u^m \|^2_2 ) ds \right),
	\end{equation*}
	
	\noindent since $c_{1} \geq 0$, we get
	
	\begin{equation}
	\label{CS_1}
	\left \| u^{m} \right \|_2^2\leq 2 c_{1}\left ( 1+\int_{0}^{t} (c_{1}+\left \| u^{m} \right \|_{2}^{2}) \ln (c_{1}+\left \| u^{m} \right \|_{2}^{2})\right )ds.
	\end{equation}
	
	\noindent When Logarithmic Gronwall inequality applied to (\ref{CS_1}), we get the following estimate:
	\begin{center}
		$\left \| u^{m} \right \|_{2}^{2}\leq 2c_{1}e^{2c_{1}T}=c_{2}.$
	\end{center}
	\noindent Hence, from inequality (\ref{KE_fn2}) it follows that
	
	\begin{center}
		$\ (b\circ \Delta u^m)(t) + \| u_t^m  \|^{\rho+2}_{\rho+2} + \| \Delta u_t^m  \|^2_2 + \| \Delta u^m \|^2_2 + \| u^m \|^2_2 \leq c(1+c_{2} \ln c_{2})\leq c_3.$
	\end{center}
	\noindent This implies
	\begin{equation}
	\label{sup_1}
	^{sup}_{t \in (0, t_m)} \left[ (go \Delta u^m)(t) + \| u_t^m  \|^{\rho+2}_{\rho+2} + \| \Delta u_t^m  \|^2_2 + \| \Delta u^m \|^2_2 + \| u^m \|^2_2 \right] \leq c_3.
	\end{equation}
	
	\noindent So, we have
	\begin{multline}
	\left\{\begin{array}{ll}
	u^{m} \text {is uniformaly bounded in }  L^{\infty }(0,T; H_{0}^{2}(\Omega)) ,
	\\
	u_{t}^{m} \text{ is uniformaly bounded in }  L^{\infty }(0,T; L^{\rho+2}(\Omega)) \cap  L^{\infty }(0,T; H_{0}^{2}(\Omega)),
	\end{array} \right.
	\end{multline} 
	\noindent therefore, these satisfies a subsequence of $ (u_{m}) $, such that 
	\begin{multline}
	\label{u_bdd}
	\left\{\begin{array}{ll}
	u^{m}\overset {\ast }{\rightarrow} u \ {\rm in\ }  L^{\infty }(0,T; H_{0}^{2}(\Omega)) ,
	\\
	u_{t}^{m} \rightharpoonup u_{t} \ {\rm in\ }  L^{\infty }(0,T; L^{\rho+2}(\Omega)) \cap  L^{\infty }(0,T; H_{0}^{2}(\Omega)) ,
	\\
	u^{m}\rightharpoonup u_{t} \ {\rm in\ }  L^{2}(0,T; H_{0}^{2}(\Omega)) ,
	\\
	u^{m}\rightharpoonup u_{t} \ {\rm in\ }  L^{2}(0,T; L^{\rho+2}(\Omega))\cap L^{2}(0,T; H_{0}^{2}(\Omega)) ,
	\end{array} \right.
	\end{multline}
	\noindent where $ \overset {\ast }{\rightarrow} $ represent the weak $ \ast $ convergence and $ \rightharpoonup $ represent weak convergence. Therefore, the approximate solution is uniformly bounded and it is independent of $m$ and $t$. Therefore we can extend $t_{m}$ to $T$. \\
	
	Next we prove that $ u_{tt}^{m} $ is bounded in $ L^{2}(0,T;H_{0}^{2}(\Omega)) $. To do this, we substitute $ w=u_{tt}^{m} $ in (\ref{weak_sol}). Using (\ref{eqn_s1}), we see that 
	\begin{multline}
	\int_{\Omega}\left | u_{t}^{m} \right |^{\rho}\left | u_{tt}^{m} \right |^{2}dx+\left \| \Delta u_{tt}^{m} \right \|_{2}^{2}=-\int_{\Omega}(\Delta u^{m}\Delta u_{tt}^{m}+u^{m}u_{tt}^{m})
	\\
	+\int_{\Omega}\int_{0}^{t}b(t-s)\Delta u^{m}(s)\Delta u_{tt}^{m}(t)ds dx-\int_{\Omega}h(u_{t}^{m})u_{tt}^{m}dx+k\int_{\Omega}u_{tt}^{m}u^{m} \ln \left | u^{m} \right |dx.
	\end{multline}
	By using the Cauchy-Schwarz' inequality, Young's inequality, and the embedding inequality we obtain,\medskip \\
	$\displaystyle\int_{\Omega}\left | u_{t}^{m} \right |^{\rho}\left | u_{tt}^{m} \right |^{2}dx+\left \| \Delta u_{tt}^{m} \right \|_{2}^{2}$
	\begin{equation}
	\label{bdd_1}
	\begin{array}{ll}
	& \leq \delta \left \| \Delta u_{tt}^{m} \right \|_{2}^{2}   + \frac{1}{4\delta}\left \| \Delta u^{m}(t) \right \|_2^{2} + \delta \left \| u_{tt}^{m} \right \|_{2}^{2} + \frac{1}{4\delta}\left \| u^{m}(t) \right \|_2^{2} +\delta \left \| \Delta u_{tt}^{m} \right \|^{2} \medskip \\
	& \quad +\displaystyle\frac{1}{4\delta }\left ( \int_{0}^{t}b(t-s)\left \| \Delta u^{m}(s) \right \|ds \right )^{2}+ \frac{1}{4\delta}\int_{\Omega}h^2(u_{t}^{m})dx + \delta \left \| u_{tt}^{m} \right \|_{2}^{2}\medskip \\
	& \quad +k\int_{\Omega}u_{tt}^{m}u^{m} \ln \left | u^{m} \right |dx\medskip \\
	& \leq 2\delta \left \| \Delta u_{tt}^{m} \right \|_{2}^{2} + 2\delta \left \|  u_{tt}^{m} \right \|_{2}^{2}  +k\int_{\Omega}u_{tt}^{m}u^{m} \ln \left | u^{m} \right |dx\medskip \\
	& \quad +\displaystyle\frac{1}{4\delta}\Bigg[\left \| \Delta u^{m}(t) \right \|_2^{2} + \left ( \int_{0}^{t}b(t-s)\left \| \Delta u^{m}(s) \right \|ds \right )^{2} +  \int_{\Omega} h(u_{t}^{m}) dx + \|u^m\|^2 \Bigg] \medskip \\
	& \leq c\delta \left \| \Delta u_{tt}^{m} \right \|_{2}^{2} +k\int_{\Omega}u_{tt}^{m}u^{m} \ln \left | u^{m} \right |dx\medskip \\
	& \quad +\displaystyle\frac{1}{4\delta}\Bigg[\left \| \Delta u^{m}(t) \right \|_2^{2} + \left ( \int_{0}^{t}b(t-s)\left \| \Delta u^{m}(s) \right \|ds \right )^{2} +  \int_{\Omega} h(u_{t}^{m}) dx + \|u^m\|^2 \Bigg].
	\end{array}
	\end{equation}

	\noindent Using Lemma \ref{eps_1} with $\epsilon_0=\frac{1}{2}$, the second term in the right hand side of (\ref{bdd_1}) is estimated as follows:
	\begin{multline}
	\label{bdd_2}
	\begin{array}{ll}
	\!\!\!\!\!\!k\int_{\Omega}u_{tt}^{m}u^{m} \ln \left | u \right |^{m}dx\!\!\! & \leq c\int_{\Omega}u_{tt}^{m}\left ( \left | u^{m} \right |^{2}+d_{\frac{1}{2}}\sqrt{u^{m}} \right )dx
	\\
	& \leq c\left ( \delta \int_{\Omega}\left | u_{tt}^{m} \right |^{2}dx+\frac{1}{4\delta}\int_{\Omega}\left ( \left | u^{m} \right |^{2}+d_{\frac{1}{2}}\sqrt{u^m} \right )^{2} dx\right )
	\\
	& \leq c\delta\left \| u_{tt}^{m} \right \|_{2}^{2}+\frac{c}{4\delta}\left ( \int_{\Omega}\left | u^{m} \right |^{4}dx+\int_{\Omega}\left | u^{m} \right |dx \right )
	\\
	& \leq c\delta\left \|\Delta u_{tt}^{m}  \right \|_{2}^{2}+\frac{c}{4\delta}\left ( \left \| \Delta u^{m} \right \|_{2}^{4} +\left \| u^{m} \right \|_{2}\right ).
	\end{array} 
	\end{multline}
	\noindent from (\ref{bdd_1}) and (\ref{bdd_2}) we have\medskip \\
	\begin{equation}
	\label{bdd_2.}
	\begin{array}{ll}
	\displaystyle\int_{\Omega}\left | u_{t}^{m} \right |^{\rho}\left | u_{tt}^{m} \right |^{2}dx+(1-c\delta)\left \| \Delta u_{tt}^{m} \right \|_{2}^{2} 
	\leq \frac{c}{4\delta}\left ( \left \| \Delta u^{m} \right \|_{2}^{4} +\left \| u^{m} \right \|_{2}\right ) \medskip \\ + \displaystyle\frac{1}{4\delta}\Bigg[\left \| \Delta u^{m}(t) \right \|^{2} + \left ( \int_{0}^{t}b(t-s)\left \| \Delta u^{m}(s) \right \|ds \right )^{2} +  \int_{\Omega} h(u_{t}^{m}) dx + \|u^m\|^2 \Bigg]
	\end{array} 
	\end{equation}
	
	\noindent Now we prove that $ h(u_{t}^{m} ) $ is bounded in $ L^{2}(0,T;L^{2}(\Omega)).$ For this purpose, we consider two cases:
	\\
	Case {1}. When $ h_{1} $ is linear, we can directly prove that 
	\begin{center}
		$ \int_{0}^{t}\int_{\Omega}h^{2}(u_{t}^{m}) dx dt\leq C $
	\end{center}
	Case {2}. When $ h_{1} $ is nonlinear,
	\\
	Consider $ \left | s \right |\leq \epsilon  $, then\\
	$$ H(h^{2}(s))=\left | h(s) \right |h_{1}(\left | h(s) \right |)\leq sh(s), $$ 
	this implies that 
	$$ H^{-1}(sh(s))\geq h^{2}(s), \hspace{0.5cm} \forall \left | s \right |\leq \delta. $$ 
	then using the similar lines from Theorem $3.2$ in \cite{Mohammad_2018}, we get
	$ \int_{0}^{t}\int_{\Omega}h^{2}(u_{t}^{m}) dx dt\leq C_T.$ we conclude that $ h(u_{t}^{m}) $ is bounded in $ L^{2}(0,T;L^{2}(\Omega)) $.\\
	\noindent Integrating (\ref{bdd_2.}) from $ (0, T) $, using the hypothesis  $(H1)$ and (\ref{sup_1}), we obtain
	\begin{multline}
	\label{bdd_3}
	\int_{0}^{T}\int_{\Omega}\left | u_{t}^{m} \right |^{\rho}\left | u_{tt}^{m} \right |^{2}dxdt+(1-2c\delta)\int_{0}^{T}\left \| \Delta u_{tt}^{m} \right \|_{2}^{2}dt\\
	\leq \frac{c}{\delta}\int_{0}^{T}\left [ (b\circ\Delta u^{m})(t)+\left \| \Delta u^{m} \right \|_{2}^{2}+\left \| \Delta u^{m} \right \|_{2}^{4}+\left \| u^{m} \right \|_{2} \right ]dx\\-c_{\delta} \int_{0}^{T} \int_{\Omega}h^{2}(u_{t}^{m}) dxdt.
	\end{multline}
	\noindent From (\ref{bdd_3}) and using the fact that $ h(u_{t}^{m} ) $ is bounded in $ L^{2}(0,T;L^{2}(\Omega))$, it is easy to observe that for $ \delta $ small enough, 
	\begin{equation}
	\label{u_tt}
	u_{tt}^{m}\ \ {\rm is\ bounded\ in\ }  L^{2}(0,T;H_{0}^{2}(\Omega)). 
	\end{equation}
	
	\noindent Taking $ m \rightarrow \infty $  to (\ref{weak_sol}) and from (\ref{u_bdd}) and (\ref{u_tt}) (thanks to Aubin-Lions' theorem), for all $ w \in H_{0}^{2}(\Omega) $ and  $a.e.\ t \in (0,T) $, we see that
	\begin{multline*}
	\int_{\Omega}\left | u_{t} \right |^{\rho}u_{tt}wdx+\int_{\Omega}\Delta u\Delta wdx+\int_{\Omega}\Delta u_{tt}\Delta wdx+\int_{\Omega}uwdx+\int_{\Omega}h(u_t)wdx
	\\
	-\int_{\Omega}\Big(\int_{0}^{t}g(t-s)\Delta u(s)ds\Big)\Delta w dx=k\int_{\Omega}wu\ln \left | u \right |dxds.
	\end{multline*}
	This complete the proof of this theorem.
\end{proof}

\section{Global existence}
In this section, under smallness condition on the initial data, we state and prove global existence result. For the sake of simplicity, we introduce the following functionals:
\begin{multline}
\label{KE_fn4}
J(t): = J(u(t)) \\ =\frac{1}{2}\left [ \left \| \Delta u_{t} \right \|_{2}^{2}+(1-\int_{0}^{t}b(s)ds)\left \| \Delta u \right \|_{2}^{2}+\left \| u \right \|_{2}^{2}+(b\circ\Delta u)-\int_{\Omega}u^{2} \ln \left | u \right |^{k} \right ]\\+\frac{k}{4}\left \| u \right \|_{2}^{2},
\end{multline} 
and
\begin{equation}
\label{KE_fn5}
I(t):=I(u(t))
=\left \| \Delta u_{t} \right \|_{2}^{2}+(1-\int_{0}^{t}b(s)ds)\left \| \Delta u \right \|_{2}^{2}+\left \| u \right \|_{2}^{2}+(b\circ\Delta u)-\int_{\Omega}u^{2} \ln \left | u \right |^{k},
\end{equation}

\noindent \textbf{Note:} From (\ref{E_fn}), (\ref{KE_fn4}) and (\ref{KE_fn5}), it is clear that
\begin{equation} 
J(t)=\frac{1}{2}I(t)+\frac{k}{4}\left \| u \right \|_{2}^{2}
\end{equation}
\begin{equation}
\label{reEJ_1}
E(t)=\frac{1}{\rho+2 }\left \| u_{t} \right \|_{\rho+2}^{\rho+2}+J(t),
\end{equation}

\noindent \textbf{Notation:}
Define $ \overline{\rho}=e^{\frac{2Q_{0}-k}{k}} $,\ $ d=\frac{1}{2}Q_{0}\overline{\rho}^{2}-\frac{k}{4}\overline{\rho}^{2}ln\overline{\rho}^{2} $ and\medskip\\ $ Q_{0}=\frac{k+2}{2}+k(1+ \ln a), $ where $ 0<a<\sqrt{\frac{2\pi c_{p}l}{k}}. $
\begin{lemma}
	\label{Global_1}
	Let $(u_{0},u_{1} ) \in H_{0}^{2}(\Omega)\times H_{0}^{2}(\Omega)$ and assume that the hypothesis $(H1)$ holds. Further assume that $ \left \| u_{0} \right \|< \overline{\rho } $ and  	$ 0 < E(0) <d.$ Then $ I(u)>0 \hspace{0.2cm} \forall t\in [0,T).$
\end{lemma}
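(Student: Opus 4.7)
The plan is to argue by continuity plus contradiction, exploiting the two faces of the condition $E(0)<d$: a global energy bound and a pointwise bound on $\|u(t)\|_2^2$ extracted through the logarithmic Sobolev inequality. First I would verify $I(0)>0$. Applying the logarithmic Sobolev corollary to $u_0$ with the same constant $a$ used to define $Q_0$, and using that $a$ satisfies (\ref{assmup_a}), I get
\begin{equation*}
I(0)\ge \|\Delta u_1\|_2^2+\Bigl(l-\tfrac{kc_pa^2}{2\pi}\Bigr)\|\Delta u_0\|_2^2+\|u_0\|_2^2\Bigl[1+k(1+\ln a)-\tfrac{k}{2}\ln\|u_0\|_2^2\Bigr].
\end{equation*}
The bracketed quantity is positive precisely when $\ln\|u_0\|_2^2<\frac{2Q_0-k}{k}=\ln\bar\rho^2$, which is exactly the hypothesis $\|u_0\|<\bar\rho$. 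Hence $I(0)>0$.

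Next, since $u\in C^1([0,T];H_0^2(\Omega))$, the map $t\mapsto I(u(t))$ is continuous. If $I(u(t))$ were not strictly positive throughout $[0,T)$, there would exist a smallest time $t_*\in(0,T)$ with $I(u(t_*))=0$. The point of choosing $t_*$ minimal is that $I>0$ on $[0,t_*)$, so the relation $J(t)=\tfrac12 I(t)+\tfrac{k}{4}\|u\|_2^2$ specializes at $t_*$ to
\begin{equation*}
J(t_*)=\tfrac{k}{4}\|u(t_*)\|_2^2.
\end{equation*}
Combining this with $J(t_*)\le E(t_*)\le E(0)<d=\tfrac{k}{4}\bar\rho^2$ (the middle inequality uses the energy identity (\ref{DE_fn}) together with (H1) and (H3)), I obtain $\|u(t_*)\|_2^2<\bar\rho^2$.

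For the contradiction I go back to $I(t_*)=0$ and bound the logarithmic term from above via the corollary of the logarithmic Sobolev inequality. Dropping the nonnegative terms $\|\Delta u_t\|_2^2$, $(l-\tfrac{kc_pa^2}{2\pi})\|\Delta u\|_2^2$ and $(b\circ\Delta u)(t_*)$, what remains is
\begin{equation*}
0\ge \bigl(1+k(1+\ln a)\bigr)\|u(t_*)\|_2^2-\tfrac{k}{2}\|u(t_*)\|_2^2\ln\|u(t_*)\|_2^2.
\end{equation*}
Assuming $\|u(t_*)\|_2\neq 0$ (the trivial case is handled directly from the remaining nonnegative terms), division gives $\ln\|u(t_*)\|_2^2\ge\frac{2(1+k(1+\ln a))}{k}=\frac{2Q_0-k}{k}=\ln\bar\rho^2$, i.e.\ $\|u(t_*)\|_2^2\ge\bar\rho^2$, contradicting the strict inequality derived from the energy bound. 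Therefore $I(u(t))>0$ throughout $[0,T)$.

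The step I expect to demand most care is the bookkeeping at $t_*$: one must have chosen the parameter $a$ so that simultaneously $l-\tfrac{kc_pa^2}{2\pi}>0$ and the algebraic identity relating $Q_0$, $\bar\rho$ and $d$ (namely $d=\tfrac{k}{4}\bar\rho^2$) holds. The admissibility of $a$ is guaranteed by (H4), and the identity is a direct computation from the definitions in the Notation block, but it is this match that makes the two sides of the contradiction — the energy bound $\|u(t_*)\|^2<\bar\rho^2$ and the Sobolev bound $\|u(t_*)\|^2\ge\bar\rho^2$ — hit the same threshold.
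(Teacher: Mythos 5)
Your proof is correct and reaches the stated conclusion, but it reorganizes the argument relative to the paper. The paper runs its continuity/contradiction argument on the quantity $\|u(t)\|_2$ itself: from the logarithmic Sobolev inequality it derives $E(t)\ge Q(\|u(t)\|_2)$ with $Q(\tilde\rho)=\tfrac12 Q_0\tilde\rho^2-\tfrac{k}{4}\tilde\rho^2\ln\tilde\rho^2$, observes that $Q$ attains its maximum $d$ exactly at $\bar\rho$, and concludes by the intermediate value theorem that $\|u(t)\|_2$ can never reach $\bar\rho$, since that would force $E(t)\ge d>E(0)$; positivity of $I$ is then read off directly from the pointwise bound $\|u(t)\|_2<\bar\rho$. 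You instead run the continuity argument on $I$ itself, and at the first zero $t_*$ you extract $\|u(t_*)\|_2^2<\bar\rho^2$ not from the variational characterization $d=\max Q$ but from the algebraic identity $J=\tfrac12 I+\tfrac{k}{4}\|u\|_2^2$ together with $d=\tfrac{k}{4}\bar\rho^2$ (which does follow from the Notation block, since $\ln\bar\rho^2=(2Q_0-k)/k$). Both routes use the same ingredients --- log-Sobolev, the monotone energy, and the threshold $\bar\rho$ --- and your version makes the role of $d$ quite transparent, at the modest cost of having to verify $I(0)>0$ separately, a step the paper's ordering does not need.

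One soft spot: the degenerate case $\|u(t_*)\|_2=0$ is not ``handled directly from the remaining nonnegative terms.'' If $u(t_*)=0$, those terms only give $\Delta u_t(t_*)=0$ and $(b\circ\Delta u)(t_*)=0$, which is not yet a contradiction; one must additionally use $b(0)>0$ to deduce $\Delta u(s)=0$ on a left neighbourhood of $t_*$ and ultimately play this against the hypothesis $E(0)>0$. This is a genuine, if minor, extra step. It is fair to note, however, that the paper's own Step 2 only concludes $I(t)\ge 0$ rather than the strict inequality claimed in the lemma, so your argument is at least as careful on this point.
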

\begin{proof}
	We divide the proof for this lemma into two steps. In step (1), we prove that $ \left \| u \right \|_{2}< \overline{\rho } \hspace{0.2cm} \forall \ t \in [0,T) $ and in step (2), we prove $ I(t) >0 \hspace{0.2cm} t \in [0,T).$
	\\
	\textbf{Step 1.} From (\ref{E_fn}) and (\ref{KE_fn4}), and using Logarithmic Sobolev inequality it is easy to see that
	\begin{multline*}
	E(t)\geq J(t)
	\\
	\geq \frac{1}{2}\left ( l-\frac{c_{p}ka^{2}}{2\pi} \right )\left \| \Delta u \right \|^{2}+\frac{1}{2}\left ( \frac{k+2}{2}+k(1+ \ln a)-\frac{k}{2} \ln \left \| u \right \|_{2}^{2} \right )\left \| u \right \|_{2}^{2}.
	\end{multline*}
	Using (\ref{assmup_a}), we obtain 
	\begin{equation}
	\label{relEQ_1}
	E(t)\geq Q(\tilde{\rho})=\frac{1}{2}Q_{0}\tilde{\rho}^{2}-\frac{k}{4}\tilde{\rho}^{2}\ln \tilde{\rho}^{2}
	\end{equation}
	where $ \tilde{\rho}=\left \| u \right \|_{2}.$ Observe that 
	(\ref{relEQ_1}) implies $ Q $ is increasing on $ (o, \overline{\rho }) $ and decreasing on $ (\overline{\rho }, \infty) $. Also, note that $ Q(\tilde{\rho }) \rightarrow -\infty $ as $\tilde{\rho } \rightarrow +\infty $. Let 
	\begin{center}
		$\underset{0<\tilde{\rho}<+\infty}{max}Q(\tilde{\rho})=\frac{1}{2}Q_{0}\overline{\rho}^{2}-\frac{\alpha}{4}\overline{\rho}^{2}ln \overline{\rho}^{2}=Q(\overline{\rho})=d. $
	\end{center}
	\noindent Suppose that $ \left \| u \right \|< \overline{\rho} $ does not hold in $ [0, T), $ then there exist $ t_{0} \in (0,T) $ and $ \left \| u(x, t_{0}) \right \|=\overline{\rho}. $ Using (\ref{relEQ_1}), we get $ E(t_{0})\geq Q(\left \| u(x, t_{0}) \right \|_{2})=Q(\overline{\rho})=d $, which is contradiction to the fact $ E(t) \leq E(0) <d \hspace{0.2cm} \forall t>0 $. Hence $ \left \| u \right \|_{2}<\overline{\rho} \hspace{0.2cm} \forall t \in [0,T) $. Hence, $ \left \| u \right \|< \overline{\rho} $ for all $t\in[0,T).$\medskip\\
	\textbf{Step 2.}
	Using the definition of $ I(t) $ and (\ref{assmup_a}), we notice that for $ t \in [0,T), $
	\begin{equation*}
	\begin{array}{ll}
	I(t)& \geq \left ( l-\frac{c_{p}ka^{2}}{2\pi} \right )\left \| \Delta u \right \|_{2}^{2}+\left ( 1+k(1+ \ln a)-\frac{k}{2} \ln \left \| u \right \|_{2}^{2} \right )\left \| u \right \|_{2}^{2}
	\medskip \\
	& \geq \left ( l-\frac{c_{p}ka^{2}}{2\pi} \right )\left \| \Delta u \right \|_2^{2}+\left \| u \right \|_2^{2} \medskip \\
	& \geq 0	.
	\end {array} 
	\end{equation*}
	\noindent This complete the proof of this lemma.
\end{proof}

\begin{remark}
	\label{rmk_global}
	Under the assumptions of Lemma (\ref{Global_1}), for $ t \in [0,T), $ we have
	\begin{center} 
		$ \left \| u_t \right \|_{\rho+2}^{\rho+2}\leq (\rho+2) E(t) \leq (\rho+2)E(0),$
	\end{center}
	and 
	\[
	\left \| \Delta u_t \right \|^{2}\leq 2E(t) \leq 2E(0).
	\]
	This shows that the solution is global and bounded in time (in the above mentioned norm).
\end{remark}

\section{Stability}
In this section, we state and prove the decay of the solutions of the problem (\ref{eqn_s1}). At first we establish some lemmas which are useful to prove our main results. 

\begin{lemma}
	\label{est_b1}
	Assume that $b$ satisfies the hypothesis $(H1)$. Then, for $u\in H_0^2(\Omega)$, we have
	\[
	\int_\Omega \Big( \int_0^t b(t-s)(u(t)-u(s)) ds\Big)^2dx \leq c(b\circ \Delta u)(t),
	\]
	and
	\[
	\int_\Omega \Big( \int_0^t b'(t-s)(u(t)-u(s)) ds\Big)^2dx \leq -c(b'\circ\Delta u)(t),
	\]
	for some $c>0$.
\end{lemma}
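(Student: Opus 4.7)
The plan is to establish both inequalities by the same two-step recipe: first apply the Cauchy-Schwarz inequality in a weighted form to the inner time integral, then use the Poincaré-type embedding $H_0^2(\Omega)\hookrightarrow L^2(\Omega)$, i.e.\ $\|u\|_2^2\le c\|\Delta u\|_2^2$, to convert $L^2$-norms of the difference into $\|\Delta u(t)-\Delta u(s)\|_2^2$, which is precisely the quantity weighted by $b$ (or $b'$) inside the memory term $(b\circ\Delta u)(t)$.

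For the first estimate, I would write
\[
\Big(\int_0^t b(t-s)(u(t)-u(s))\,ds\Big)^2
=\Big(\int_0^t \sqrt{b(t-s)}\,\sqrt{b(t-s)}\,(u(t)-u(s))\,ds\Big)^2,
\]
and apply Cauchy-Schwarz to split the product; the first factor yields $\int_0^t b(t-s)\,ds\le\int_0^\infty b(\tau)\,d\tau\le 1-l$ by hypothesis (H1). Integrating the remaining factor over $\Omega$ and switching the order of integration via Fubini gives a term $\int_0^t b(t-s)\|u(t)-u(s)\|_2^2\,ds$, and the Poincaré-type inequality then produces $c(b\circ\Delta u)(t)$.

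For the second estimate, I would exploit that $b'\le 0$ by (H1), so $|b'|=-b'$ and the same argument applies with $b$ replaced by $-b'$: Cauchy-Schwarz yields
\[
\Big(\int_0^t |b'(t-s)|(u(t)-u(s))\,ds\Big)^2
\le\Big(\int_0^t |b'(t-s)|\,ds\Big)\Big(\int_0^t |b'(t-s)|(u(t)-u(s))^2\,ds\Big),
\]
and the first factor equals $b(0)-b(t)\le b(0)$. Integrating over $\Omega$, applying Fubini, and using the Poincaré embedding yields $-c(b'\circ\Delta u)(t)$, since $-b'(t-s)=|b'(t-s)|\ge 0$ makes the sign come out correctly.

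There is no real obstacle here; the only thing that requires attention is bookkeeping of the sign in the second inequality (both factors in the Cauchy-Schwarz product must be nonnegative, which forces working with $-b'$ and accounts for the minus sign on the right-hand side) and the observation that $c$ absorbs the constant $c_p$ from the Poincaré-type embedding $\|u\|_2^2 \le c_p\|\Delta u\|_2^2$ together with the finite factors $1-l$ and $b(0)$ from the weighted Cauchy-Schwarz steps.
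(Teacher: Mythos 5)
Your proof is correct and is essentially the standard argument that the paper defers to by citation: weighted Cauchy--Schwarz on the memory integral (splitting $b=\sqrt{b}\cdot\sqrt{b}$, resp.\ working with $|b'|=-b'$), Fubini, and the embedding $\|v\|_2^2\le c\|\Delta v\|_2^2$ for $v=u(t)-u(s)\in H_0^2(\Omega)$, with the bounds $\int_0^t b\le 1-l$ and $\int_0^t|b'|\le b(0)$ supplying the finite constants. The only cosmetic point is that the paper's $c_p$ is defined by $\|\nabla u\|_2^2\le c_p\|\Delta u\|_2^2$ rather than by an $L^2$ bound, so the inequality you invoke is obtained by chaining two Poincar\'e-type estimates; this is harmless and is absorbed into the generic constant $c$.
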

\noindent For the proof of Lemma \ref{est_b1}, refer to \cite{Mohammad_2018-1}.

\begin{lemma}
	\label{est_h1}
	Assume that $h$ satisfies the hypothesis $(H3)$. Then, the solution of \eqref{eqn_s1} satisfies the following estimates:
	\begin{equation}
	\displaystyle \int_{\Omega} h^2(u_t)dx \leq c \int_{\Omega}u_th(u_t)dx \leq -cE'(t), \hspace{1cm} {\rm if}\ h_1 \ {\rm is\ linear}
	\end{equation}
	\begin{equation}
	\displaystyle \int_{\Omega} h^2(u_t)dx \leq c H^{-1}(G(t)) - cE'(t), \hspace{1cm} {\rm if}\ h_1 \ {\rm is\ nonlinear}
	\end{equation}
	where $G(t):=\frac{1}{|\Omega_1|} \int_{\Omega_1} u_th(u_t)dx \leq -cE'(t)$ and $\Omega_1=\{ x\in \Omega: |u_t|\leq \epsilon \}$ for some $c, \epsilon >0$.
\end{lemma}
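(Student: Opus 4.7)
The plan is to start from the energy dissipation identity \eqref{DE_fn}, which yields $\int_\Omega u_t h(u_t)\,dx \leq -E'(t)$, and then to bound $h^2(u_t)$ pointwise either by $u_t h(u_t)$ directly (linear case) or by $H^{-1}(u_t h(u_t))$ (nonlinear case), after which Jensen's inequality and the above energy bound finish the job.

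For the linear case, since both $h_1$ and $h_1^{-1}$ are linear, the two-sided inequality in $(H3)$ gives $|h(s)|$ comparable to $|s|$ for $|s|\le \epsilon$, while the global condition $c_1|s|\le |h(s)|\le c_2|s|$ handles $|s|\ge \epsilon$. Since $h(s)$ has the sign of $s$ (by the earlier remark that $\tau h(\tau)>0$), we conclude $h^2(s) \le c\,s\,h(s)$ pointwise, so integrating and applying \eqref{DE_fn} gives the first estimate.

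For the nonlinear case, I would split $\Omega = \Omega_1 \cup \Omega_2$ with $\Omega_1 = \{|u_t|\le\epsilon\}$. On $\Omega_2$ the previous argument applies verbatim using the global linear growth of $h$, producing $\int_{\Omega_2} h^2(u_t)\,dx \le -cE'(t)$. On $\Omega_1$, from the upper bound $|h(s)|\le h_1^{-1}(|s|)$ I apply the monotone function $h_1$ to both sides to obtain $h_1(|h(s)|)\le |s|$; hence
\[
H(h^2(s)) = |h(s)|\,h_1(|h(s)|) \le |s|\,|h(s)| = s\,h(s).
\]
Since $H$ is strictly convex and strictly increasing, $H^{-1}$ is concave and nondecreasing, so Jensen's inequality gives
\[
\int_{\Omega_1} h^2(u_t)\,dx \le \int_{\Omega_1} H^{-1}(u_t h(u_t))\,dx \le |\Omega_1|\, H^{-1}\!\left(\frac{1}{|\Omega_1|}\int_{\Omega_1} u_t h(u_t)\,dx\right) = |\Omega_1|\,H^{-1}(G(t)).
\]
Adding the two contributions and absorbing $|\Omega_1|$ into the generic constant $c$ yields the second estimate; the auxiliary bound $G(t) \le -cE'(t)$ follows by enlarging $\Omega_1$ to $\Omega$ in the numerator and invoking \eqref{DE_fn}.

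The only delicate point I anticipate is ensuring that the argument $u_t h(u_t)$ of $H^{-1}$ stays in the interval $(0,r_2]$ where $H$ was defined to be strictly convex, so that $H^{-1}$ and Jensen are both legitimate; this is handled by working with the canonical extension $\bar H$ on $(0,\infty)$ guaranteed by Remark~1.2. A minor secondary issue is that Jensen requires $|\Omega_1|>0$, which is automatic in the nontrivial regime (if $|\Omega_1|=0$ then $\int_{\Omega_1}h^2(u_t)\,dx=0$ and there is nothing to prove).
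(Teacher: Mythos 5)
Your argument is correct and is essentially the same as the one the paper relies on: the paper skips the proof of this lemma, citing \cite{Mohammad_2019}, but the key inequality $H(h^2(s))=|h(s)|h_1(|h(s)|)\leq s h(s)$ together with the splitting of $\Omega$ into $\{|u_t|\leq\epsilon\}$ and its complement, Jensen's inequality for the concave $\bar H^{-1}$, and the dissipation identity \eqref{DE_fn} is exactly the standard route (a sketch of which already appears in the paper's local-existence proof). Your remarks on using the extension $\bar H$ and on the degenerate case $|\Omega_1|=0$ are appropriate and consistent with the paper's conventions.
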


\begin{lemma}
	\label{est_b_t}
	Assume that $b$ satisfies the hypothesis $(H1)$ and $(H2)$. Then the solution of \eqref{eqn_s1} satisfies the following estimate:
	\begin{equation}
	\displaystyle \int_{t_1}^t b(s)\int_{\Omega} |\Delta u(t)- \Delta u(t-s)|^2dxds \leq \frac{t-t_1}{\delta} \bar{B}^{-1}\Big( \frac{\delta M(t)}{(t-t_1)\xi(t)} \Big),
	\end{equation}	
	where $\delta \in (0,1)$, $\bar{B}$ is an extension of $B$ and 
	\begin{equation}
	\label{rel_M_E}
	M(t):= -\displaystyle \int_{t_1}^t b'(s)\int_{\Omega} |\Delta u(t)- \Delta u(t-s)|^2dxds \leq -cE'(t).
	\end{equation}
\end{lemma}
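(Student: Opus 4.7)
The plan is to combine Jensen's inequality for the convex function $\bar{B}$ with hypothesis (H2) and the monotonicity of $\xi$, as is standard in Mustafa-type decay arguments. Write $K(s) := \int_\Omega |\Delta u(t) - \Delta u(t-s)|^2\, dx$ for brevity; by Remark \ref{rmk_global} together with (H1), $K$ is uniformly bounded in $s$ and $t$, so by choosing $\delta \in (0,1)$ small enough we may assume $\delta K(s) \leq 1$ throughout $[t_1, t]$.

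The core computation chains three facts. First, Jensen's inequality on $[t_1, t]$ with normalized Lebesgue measure, applied to the convex function $\bar B$, gives
\[
\bar B\!\left(\frac{\delta}{t-t_1}\int_{t_1}^{t} b(s) K(s)\, ds\right) \leq \frac{1}{t-t_1}\int_{t_1}^{t} \bar B(\delta\, b(s) K(s))\, ds.
\]
Second, the elementary property $\bar B(\lambda x) \leq \lambda \bar B(x)$ (valid for convex $\bar B$ with $\bar B(0)=0$ and $\lambda\in[0,1]$), applied with $\lambda = \delta K(s)$ and $x = b(s)$ (and using $\bar B(b(s)) = B(b(s))$ since $b(s) \leq b(0) \leq r_1$), yields $\bar B(\delta b(s) K(s)) \leq \delta K(s) B(b(s))$. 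Third, (H2) combined with $\xi(s)\geq \xi(t)$ (as $\xi$ is nonincreasing) gives $B(b(s)) \leq -b'(s)/\xi(t)$. Integrating over $[t_1,t]$ then produces
\[
\bar B\!\left(\frac{\delta}{t-t_1}\int_{t_1}^{t} b(s) K(s)\, ds\right) \leq \frac{\delta M(t)}{(t-t_1)\xi(t)},
\]
and applying the monotone inverse $\bar B^{-1}$ followed by multiplication by $(t-t_1)/\delta$ delivers the first assertion.

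For the inequality $M(t) \leq -cE'(t)$, I would change variables $\tau = t - s$ to rewrite $M(t) = -\int_0^{t-t_1} b'(t-\tau)\|\Delta u(t)-\Delta u(\tau)\|_2^2\, d\tau$. Since the integrand is nonpositive, enlarging the integration interval from $[0,t-t_1]$ to $[0,t]$ gives $M(t) \leq -(b'\circ \Delta u)(t)$; combining this with (\ref{DE_fn}), which yields $(b' \circ \Delta u)(t) \geq 2 E'(t)$, one obtains $M(t) \leq -2 E'(t) \leq -cE'(t)$. The only delicate step in the whole argument is the convexity estimate $\bar B(\lambda x) \leq \lambda \bar B(x)$, which relies on $\delta K(s) \leq 1$; once the uniform control on $K(s)$ coming from global existence (Remark \ref{rmk_global}) is secured, the remaining manipulations are mechanical.
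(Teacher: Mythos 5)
Your argument is correct and is essentially the proof the paper implicitly relies on: the paper omits the proof and refers to \cite{Mohammad_2019}, where exactly this chain (uniform bound on $\|\Delta u(t)-\Delta u(t-s)\|_2^2$ so that $\delta K(s)\le 1$, the convexity property $\bar B(\lambda x)\le\lambda\bar B(x)$, Jensen's inequality, (H2) with the monotonicity of $\xi$, and the comparison of $M(t)$ with $-(b'\circ\Delta u)(t)$ via \eqref{DE_fn}) is used. Two cosmetic points only: the uniform bound on $\|\Delta u\|_2^2$ comes from $E(t)\le E(0)$ together with Step 1 of Lemma \ref{Global_1} rather than from Remark \ref{rmk_global} as stated, and the hypothesis reads $r_1\le b(0)$ (not $b(0)\le r_1$), so the identification $\bar B(b(s))=B(b(s))$ should be justified by $b(s)\le b(t_1)\le r_1$ for $s\ge t_1$ (or simply by working with the extension $\bar B$ throughout).
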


\noindent The proofs of Lemma's \ref{est_h1} and \ref{est_b_t} follows from the similar lines as in  \cite{Mohammad_2019}. So, we skip the proof's.

\begin{lemma}
	Under the hypothesis $(H1)-(H4)$, the functional $\Psi_1(t)$
	\[
	\Psi_1(t):=\frac{1}{\rho +1}\int_{\Omega} |u_t|^{\rho}u_tudx + \int_{\Omega} \Delta u\Delta u_tdx,
	\]
	satisfies the estimate:
	\begin{multline}
	\label{est_psi1}
	\Psi_1'(t) \leq \frac{1}{\rho +1} \|u_t\|_{\rho+2}^{\rho+2} + \|\Delta u_t\|_2^2 -\frac{l}{4}\|\Delta u\|_2^2 - \|u\|_2^2+c\int_{\Omega} h^2(u_t)dx \\ + c(b\circ \Delta u)(t) + k\int_{\Omega} u^2\ln |u|dx.
	\end{multline}
\end{lemma}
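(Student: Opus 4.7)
The plan is to differentiate $\Psi_1$, substitute the PDE (multiplied by $u$), and then absorb the resulting cross terms via Young's inequality, Lemma \ref{est_b1}, and the Poincar\'e-type bound from (H4).

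First I would compute $\Psi_1'(t)$ directly. Using $\tfrac{d}{dt}(|u_t|^\rho u_t) = (\rho+1)|u_t|^\rho u_{tt}$, one obtains
\[
\Psi_1'(t) = \tfrac{1}{\rho+1}\|u_t\|_{\rho+2}^{\rho+2} + \|\Delta u_t\|_2^2 + \int_\Omega |u_t|^\rho u_{tt}\,u\,dx + \int_\Omega \Delta u_{tt}\,\Delta u\,dx.
\]
To eliminate the second-order time derivatives, multiply the first equation of \eqref{eqn_s1} by $u$, integrate over $\Omega$, and apply integration by parts twice using the clamped boundary conditions. The $\Delta^2 u$ and $\Delta^2 u_{tt}$ terms produce $\|\Delta u\|_2^2$ and $\int_\Omega \Delta u_{tt}\Delta u\,dx$ respectively, and the memory term produces $\int_\Omega \Delta u \int_0^t b(t-s)\Delta u(s)\,ds\,dx$. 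Solving for $\int_\Omega |u_t|^\rho u_{tt} u\,dx + \int_\Omega \Delta u_{tt}\Delta u\,dx$ and substituting back yields
\[
\begin{aligned}
\Psi_1'(t) = {}& \tfrac{1}{\rho+1}\|u_t\|_{\rho+2}^{\rho+2} + \|\Delta u_t\|_2^2 - \|\Delta u\|_2^2 - \|u\|_2^2 \\
& + \int_\Omega \Delta u\int_0^t b(t-s)\Delta u(s)\,ds\,dx - \int_\Omega h(u_t)u\,dx + k\int_\Omega u^2\ln|u|\,dx,
\end{aligned}
\]
which already contains the log term in the desired form.

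The next step is to control the memory term. Write $\int_0^t b(t-s)\Delta u(s)\,ds = \int_0^t b(t-s)(\Delta u(s)-\Delta u(t))\,ds + \bigl(\int_0^t b(s)\,ds\bigr)\Delta u(t)$. Young's inequality with a small parameter $\eta_1>0$ combined with Lemma \ref{est_b1} (applied with $\Delta u$ in place of $u$) gives
\[
\int_\Omega \Delta u\int_0^t b(t-s)\Delta u(s)\,ds\,dx \leq \Bigl(\eta_1 + \int_0^t b(s)\,ds\Bigr)\|\Delta u\|_2^2 + \tfrac{c}{\eta_1}(b\circ\Delta u)(t).
\]
Since $\int_0^t b(s)\,ds \leq 1-l$ by (H1), combining this with $-\|\Delta u\|_2^2$ produces a coefficient $-l+\eta_1$ on $\|\Delta u\|_2^2$.

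For the damping term, Young's inequality with parameter $\eta_2>0$ gives
\[
-\int_\Omega h(u_t) u\,dx \leq \eta_2 \|u\|_2^2 + \tfrac{1}{4\eta_2}\int_\Omega h^2(u_t)\,dx.
\]
The key point is that we must not corrupt the $-\|u\|_2^2$ term, so I would use the Poincar\'e-type inequality from (H4) (combined with the usual Poincar\'e inequality for $H_0^1$) to estimate $\eta_2\|u\|_2^2 \leq \eta_2 c\|\Delta u\|_2^2$, shifting the $\eta_2$-loss onto $\|\Delta u\|_2^2$. The $\|\Delta u\|_2^2$ coefficient then becomes $-l + \eta_1 + \eta_2 c$, and choosing $\eta_1,\eta_2$ small enough so that $\eta_1 + \eta_2 c \leq 3l/4$ yields the desired coefficient $-l/4$. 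Collecting all terms gives the bound stated in \eqref{est_psi1}.

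The main obstacle is the bookkeeping in the last step: one has to split $-\int_\Omega h(u_t)u\,dx$ so that the $\|u\|_2^2$ part is routed through the Poincar\'e inequality (so that the $-\|u\|_2^2$ in the estimate survives with coefficient exactly $-1$), while simultaneously choosing the Young parameters for the memory term and the damping term jointly small enough to leave $-\tfrac{l}{4}\|\Delta u\|_2^2$ on the right-hand side. Everything else is standard integration by parts and Young's inequality.
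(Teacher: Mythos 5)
Your proposal is correct and follows essentially the same route as the paper: differentiate $\Psi_1$, substitute the weak form of the equation tested against $u$, split the memory term via $\Delta u(s)=(\Delta u(s)-\Delta u(t))+\Delta u(t)$ with Young's inequality and Lemma \ref{est_b1}, and absorb the damping cross term into $\|\Delta u\|_2^2$ via Young plus the Poincar\'e-type inequality of (H4). The only cosmetic difference is in how the small parameters are allocated (the paper fixes $\eta=l$ for the memory term and $\delta=l/(4c)$ for the damping term, while you budget $\eta_1+\eta_2 c\leq 3l/4$ jointly), which yields the same coefficient $-\tfrac{l}{4}$.
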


\begin{proof}
	Differentiating $\Psi_1$ with respect to $t$ and using (\ref{eqn_s1}), we get
	\begin{multline}
	\label{dummy_psi1}
	\Psi_1'(t) = \frac{1}{\rho +1} \|u_t\|_{\rho+2}^{\rho+2} + k\int_{\Omega} u^2\ln |u|dx - \|\Delta u\|_2^2 - \|u\|_2^2 +  \|\Delta u_t\|_2^2 \\ + \int_{\Omega} \Delta u(t) \int_0^t b(t-s) \Delta u(s)dsdx - \int_{\Omega} uh(u_t)dx.
	\end{multline}
	At first, we estimate the sixth term in the right hand side of (\ref{dummy_psi1}). Using the hypothesis $(H1)$ and Lemma \ref{est_b1}, we have\medskip \\
	$\int_{\Omega} \Delta u(t) \int_0^t b(t-s) \Delta u(s)dsdx$
	\begin{equation*}
	\begin{array}{ll}
	& = \int_{\Omega} \Delta u(t) \int_0^t b(t-s) (\Delta u(s) - \Delta u(t))dsdx + \int_{\Omega} (\Delta u(t))^2 \int_0^t b(t-s) ds \medskip \\
	& \leq \frac{\eta}{2} \int_{\Omega} |\Delta u|^2 + \frac{1}{2\eta} \int_{\Omega} \big( \int_0^t b(t-s)|\Delta u(t) - \Delta u(s)|ds\big)^2dx + (1-l)\int_{\Omega} |\Delta u|^2\medskip \\
	& = (1-l+\frac{\eta}{2}) \int_{\Omega} |\Delta u|^2 + \frac{c}{\eta} (b\circ \Delta u)(t) \medskip \\
	& = (1-\frac{l}{2}) \int_{\Omega} |\Delta u|^2 + \frac{c}{\eta} (b\circ \Delta u)(t).
	\end {array} 
	\end{equation*}
	The last line in the above inequality is obtained by choosing $\eta=l$. Now, we estimate the last term in (\ref{dummy_psi1}).
	\begin{equation*}
	\begin{array}{ll}
	\int_{\Omega} |uh(u_t)|
	& \leq \delta c\|\nabla u\|_2^2 + \frac{c}{\delta} \int_{\Omega}h^2(u_t)dx \medskip \\
	& \leq \delta c\|\Delta u\|_2^2 + \frac{c}{\delta} \int_{\Omega}h^2(u_t)dx.
	\end {array} 
	\end{equation*}
	Now choose $\delta = \frac{l}{4c}$ to get (\ref{est_psi1}). Hence proved.
\end{proof}

\begin{lemma}
	Under the hypothesis $(H1)-(H4)$, the functional $\Psi_2(t)$
	\[
	\Psi_2(t):=- \displaystyle \int_{\Omega} \Big( \Delta^2 u_t + \frac{1}{\rho +1}|u_t|^{\rho}u_t \Big) \int_0^t b(t-s) (u(t)-u(s))dsdx,
	\]
	satisfies the estimate:
	\begin{multline}
	\label{est_psi2}
	\Psi_2'(t) \leq -\frac{1}{\rho +1} \big(\int_0^t b(s)ds\big) \|u_t\|_{\rho+2}^{\rho+2} + \big( \frac{\delta}{2} + \delta_1 +2\delta_1(1-l)^2 \big) \|\Delta u\|_2^2 + \\ \big( \delta + c\delta_2(E(0))^{\rho} -\int_0^t b(s)ds\big) \|\Delta u_t\|_2^2 - \frac{c}{\delta} (b'\circ \Delta u)(t) + \\
	\big( c+ \frac{c}{\delta} + c\delta_1 + \frac{c}{\delta_1} \big) (b\circ \Delta u)(t)
	+ c(b\circ \Delta u)^{\frac{1}{1+\epsilon_0}}(t)+c\int_{\Omega} h^2(u_t)dx,
	\end{multline}
	for some $\epsilon_0\in (0,1).$
\end{lemma}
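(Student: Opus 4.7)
The plan is to differentiate $\Psi_2$, substitute \eqref{eqn_s1} to eliminate $|u_t|^\rho u_{tt}+\Delta^2 u_{tt}$, integrate by parts twice so that every $\Delta^2$ falls on a test factor, and then absorb each remaining piece by Hölder/Young inequalities together with Lemma \ref{est_b1}, Lemma \ref{eps_1}, and Remark \ref{rmk_global}. I would first set $v := \int_0^t b(t-s)(u(t)-u(s))\,ds$, so that $\Delta v = \int_0^t b(t-s)(\Delta u(t)-\Delta u(s))\,ds$ and
\[
v_t = \Big(\int_0^t b(s)\,ds\Big) u_t + \int_0^t b'(t-s)(u(t)-u(s))\,ds.
\]
Because $\frac{d}{dt}\bigl(\frac{|u_t|^\rho u_t}{\rho+1}\bigr) = |u_t|^\rho u_{tt}$, differentiation of $\Psi_2$ produces $-\int_\Omega (|u_t|^\rho u_{tt}+\Delta^2 u_{tt})v\,dx$ plus a $v_t$-piece; equation \eqref{eqn_s1} then replaces the first bracket by $-\Delta^2 u - u + \int_0^t b(t-s)\Delta^2 u(s)\,ds - h(u_t) + ku\ln|u|$.

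Next I would harvest the two ``leading'' terms directly from the $(\int_0^t b)u_t$-piece of $v_t$: pairing it with $-\frac{1}{\rho+1}|u_t|^\rho u_t$ produces $-\frac{1}{\rho+1}(\int_0^t b)\|u_t\|_{\rho+2}^{\rho+2}$, and pairing it with $-\Delta^2 u_t$ (after integration by parts) produces $-(\int_0^t b)\|\Delta u_t\|_2^2$. The $b'$-convolution piece of $v_t$ paired with $\Delta^2 u_t$ is then handled by Young's inequality with parameter $\delta$ and the second bound of Lemma \ref{est_b1}, giving $\frac{\delta}{2}\|\Delta u_t\|_2^2 - \frac{c}{\delta}(b'\circ\Delta u)(t)$. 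The elastic terms $\int_\Omega \Delta u\,\Delta v\,dx$ and $-\int_\Omega\!\bigl(\int_0^t b(t-s)\Delta u(s)\,ds\bigr)\Delta v\,dx$ are combined via the rewriting $\int_0^t b(t-s)\Delta u(s)\,ds = (\int_0^t b)\Delta u - \Delta v$, and two applications of Young with parameters $\delta$ and $\delta_1$ (plus the first bound of Lemma \ref{est_b1}) produce the $(\frac{\delta}{2}+\delta_1+2\delta_1(1-l)^2)\|\Delta u\|_2^2$ and $(c+\frac{c}{\delta}+c\delta_1+\frac{c}{\delta_1})(b\circ\Delta u)(t)$ contributions. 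The routine pieces $\int_\Omega uv\,dx$ and $\int_\Omega h(u_t)v\,dx$ are absorbed by Poincaré and Young, yielding $c\int_\Omega h^2(u_t)\,dx$ plus additional $(b\circ\Delta u)(t)$ mass.

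The hard parts will be the nonlinear damping cross-term and the logarithmic term. For the damping I would bound $\bigl|\int_\Omega |u_t|^\rho u_t \int_0^t b'(t-s)(u(t)-u(s))\,ds\,dx\bigr|$ by Hölder as $\|u_t\|_{\rho+2}^{\rho+1}\|w\|_{\rho+2}$ (with $w$ the inner $b'$-convolution), extract a factor $(E(0))^\rho$ through the uniform bound $\|u_t\|_{\rho+2}^{\rho+2}\le (\rho+2)E(0)$ of Remark \ref{rmk_global}, and apply Young with parameter $\delta_2$ together with the Sobolev embedding $H_0^2\hookrightarrow L^{\rho+2}$ and Lemma \ref{est_b1}; this delivers the $c\delta_2(E(0))^\rho\|\Delta u_t\|_2^2$ term, with any extra $-c(b'\circ\Delta u)(t)$ piece absorbable into $-\frac{c}{\delta}(b'\circ\Delta u)(t)$. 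The final and most delicate obstacle is $-k\int_\Omega u\ln|u|\cdot v\,dx$: I would use Lemma \ref{eps_1} to write $|u\ln|u||\le |u|^2+d_{\epsilon_0}|u|^{1-\epsilon_0}$, absorb the quadratic part by Cauchy--Schwarz, Poincaré and Lemma \ref{est_b1} into the $\|\Delta u\|_2^2$ and $(b\circ\Delta u)(t)$ reservoirs, and for the subquadratic part apply Hölder with conjugate exponents $\frac{2}{1-\epsilon_0}$ and $\frac{2}{1+\epsilon_0}$ followed by the power-shrinking bound $\int_\Omega |v|^{2/(1+\epsilon_0)}\,dx\le c\|v\|_2^{2/(1+\epsilon_0)}\le c(b\circ\Delta u)^{1/(1+\epsilon_0)}(t)$; this produces exactly the characteristic $c(b\circ\Delta u)^{1/(1+\epsilon_0)}(t)$ term of \eqref{est_psi2}. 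Collecting every contribution yields the stated inequality.
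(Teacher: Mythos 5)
Your decomposition of $\Psi_2'$ is exactly the paper's: writing $v=\int_0^t b(t-s)(u(t)-u(s))\,ds$, substituting \eqref{eqn_s1} and splitting $v_t$ into its $\big(\int_0^t b(s)ds\big)u_t$ part and its $b'$-convolution part reproduces the same nine terms, and your handling of the elastic, memory, frictional and nonlinear-damping cross terms matches the paper's estimates (I)--(IX) up to a harmless reshuffling of the Young parameters (the paper itself gives less detail than you do, deferring most terms to a cited reference).

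The one step that does not close as written is the subquadratic piece of the logarithmic term. After Lemma \ref{eps_1} you must bound $\int_\Omega |u|^{1-\epsilon_0}|v|\,dx$. If you apply the \emph{integral} H\"older inequality with conjugate exponents $\tfrac{2}{1-\epsilon_0}$ and $\tfrac{2}{1+\epsilon_0}$, the $v$-factor appears as $\big(\int_\Omega |v|^{2/(1+\epsilon_0)}dx\big)^{(1+\epsilon_0)/2}$, and your own chain $\int_\Omega |v|^{2/(1+\epsilon_0)}dx\le c\|v\|_2^{2/(1+\epsilon_0)}\le c(b\circ\Delta u)^{1/(1+\epsilon_0)}(t)$ then delivers only $c(b\circ\Delta u)^{1/2}(t)$. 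Since $\tfrac12<\tfrac{1}{1+\epsilon_0}$ and $b\circ\Delta u$ may be arbitrarily small, $(b\circ\Delta u)^{1/2}$ is \emph{not} dominated by $c(b\circ\Delta u)^{1/(1+\epsilon_0)}$, so the characteristic term of \eqref{est_psi2} is not obtained along this route. The correct move (the one your displayed chain implicitly presupposes) is the \emph{pointwise Young} inequality with the same exponents, $|u|^{1-\epsilon_0}|v|\le \tfrac{1-\epsilon_0}{2}|u|^2+\tfrac{1+\epsilon_0}{2}|v|^{2/(1+\epsilon_0)}$, so that $\int_\Omega |v|^{2/(1+\epsilon_0)}dx$ enters to the first power and the bound $c(b\circ\Delta u)^{1/(1+\epsilon_0)}(t)$ follows; the $\int_\Omega|u|^2dx$ part is then absorbed as you describe. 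With this single replacement the argument is complete and coincides with the paper's.
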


\begin{proof}
	Differentiating $\Psi_2$ with respect to $t$ and using (\ref{eqn_s1}), we get\\
	\begin{equation}
	\label{dummy_psi2}
	\begin{array}{ll}
	\!\!\Psi_2'(t)\!\! =\!\!
	& - \displaystyle\int_{\Omega}u\ln|u|^k\big( \int_0^t b(t-s)(u(t)-u(s))ds \big)dx \medskip \\
	&  + \displaystyle\int_{\Omega} \Delta u(t) \big(\int_0^t b(t-s) (\Delta u(t)-\Delta u(s))ds\big)dx  \medskip \\
	& + \displaystyle\int_{\Omega}u\big( \int_0^t b(t-s)(u(t)-u(s))ds \big)dx \medskip \\
	&- \displaystyle\int_{\Omega}\Big( \int_0^t b(t-s)\Delta u(s)ds \Big)\Big( \int_0^t b(t-s)(\Delta u(t)-\Delta u(s))ds \Big)dx \medskip \\
	& + \displaystyle\int_{\Omega}h(u_t)\big( \int_0^t b(t-s)(u(t)-u(s))ds \big)dx\medskip \\
	& - \displaystyle\int_{\Omega}\frac{1}{\rho+1}|u_t|^{\rho}u_t \big( \int_0^t b'(t-s)(u(t)-u(s))ds \big)dx \medskip \\
	& - \displaystyle\int_{\Omega}\frac{1}{\rho+1}|u_t|^{\rho}u_t^2 \big( \int_0^t b(s)ds \big)dx\medskip \\
	& -\displaystyle \int_{\Omega}\Delta u_t \big( \int_0^t b'(t-s)(\Delta u(t)-\Delta u(s))ds \big)dx \medskip \\
	& - \displaystyle\big( \int_0^t b(s)ds \big)\big(\int_{\Omega}|\Delta u_t|^2dx\big).
	\end {array} 
	\end{equation}
	Inorder to estimate the all the nine terms in the right hand side of (\ref{dummy_psi2}), we will use Young's inequality, Cauchy-Schwarz' inequality and Lemma \ref{est_b1}.  All the terms except for the third and fifth term, are estimated in the similar lines as in \cite{Mohammad_2018}. For the sake of completeness, we will write the final estimates.
	
	\begin{equation*}
	\begin{array}{ll}
	{\rm (I)\ }- \displaystyle\int_{\Omega}u\ln|u|^k\big( \int_0^t b(t-s)(u(t)-u(s))ds \big)dx 
	\medskip \\
	\hspace{4cm}\leq \displaystyle\frac{\delta}{4} \|\Delta u\|_2^2 + \frac{c}{\delta}(b\circ \Delta u)(t) + c(b\circ \Delta u)^{\displaystyle\frac{1}{1+\epsilon_0}}(t),
	\end {array} 
	\end{equation*}
	
	\begin{equation*}
	\begin{array}{ll}
	{\rm (II)\ }\displaystyle\int_{\Omega} \Delta u(t) \big(\int_0^t b(t-s) (\Delta u(t)-\Delta u(s))ds\big)dx \medskip \\
	\hspace{4cm}\leq 
	\delta_1 \|\Delta u\|_2^2 + \frac{c}{\delta_1}(b\circ \Delta u)(t),
	\end {array} 
	\end{equation*}
	
	\begin{equation*}
	\begin{array}{ll}
	{\rm (III)\ } \displaystyle\int_{\Omega}u\big( \int_0^t b(t-s)(u(t)-u(s))ds \big)dx \leq \frac{\delta}{4} \|\Delta u\|_2^2 + \frac{c}{\delta}(b\circ \Delta u)(t),
	\end {array} 
	\end{equation*}
	
	\begin{equation*}
	\begin{array}{ll}
	{\rm (IV)\ }- \displaystyle\int_{\Omega}\big( \int_0^t b(t-s)\Delta u(s)ds \big)\big( \int_0^t b(t-s)(\Delta u(t)-\Delta u(s))ds \big)dx \medskip \\
	\hspace{5cm}\leq \displaystyle\big( c\delta_1 + \frac{c}{\delta_1} \big) (b\circ \Delta u)(t) + 2\delta_1(1-l)^2 \|\Delta u\|_2^2,
	\end {array} 
	\end{equation*}
	
	\begin{equation*}
	\begin{array}{ll}
	{\rm (V)\ }\displaystyle\int_{\Omega}h(u_t)\big( \int_0^t b(t-s)(u(t)-u(s))ds \big)dx \medskip \\
	\hspace{5cm} \leq c\displaystyle\int_{\Omega} h^2(u_t)dx + (1-l)c^2(b\circ \Delta u)(t).
	\end {array} 
	\end{equation*}
	
	\begin{equation*}
	\begin{array}{ll}
	{\rm (VI)\ }- \displaystyle\int_{\Omega}\frac{1}{\rho+1}|u_t|^{\rho}u_t \big( \int_0^t b'(t-s)(u(t)-u(s))ds \big)dx  \medskip \\
	\hspace{5cm} \leq c\delta_2(E(0))^{\rho} \|\Delta u_t\|_2^2 - \displaystyle\frac{c}{\delta_2}(b'\circ \Delta u)(t),
	\end {array} 
	\end{equation*}
	
	\begin{equation*}
	\begin{array}{ll}
	{\rm (VII)\ }- \displaystyle\int_{\Omega}\frac{1}{\rho+1}|u_t|^{\rho}u_t^2 \big( \int_0^t b(s)ds \big)dx  = -\frac{1}{\rho +1} \big(\int_0^t b(s)ds\big) \|u_t\|_{\rho+2}^{\rho+2},
	\end {array} 
	\end{equation*}
	
	\begin{equation*}
	\begin{array}{ll}
	{\rm (VIII)\ }- \displaystyle\int_{\Omega}\Delta u_t \big( \int_0^t b'(t-s)(\Delta u(t)-\Delta u(s))ds \big)dx  \medskip \\
	\hspace{5cm} \leq \delta \|\Delta u_t\|_2^2 - \displaystyle \frac{c}{\delta}(b'\circ \Delta u)(t),
	\end {array} 
	\end{equation*}
	and
	\begin{equation*}
	\begin{array}{ll}
	{\rm (IX)\ }- \displaystyle\big( \int_0^t b(s)ds \big)\big(\int_{\Omega}|\Delta u_t|^2dx\big) = - \big( \int_0^t b(s)ds \big) \|\Delta u_t\|_2^2.
	\end {array} 
	\end{equation*}
	Combining all the estimates we will arrive at (\ref{est_psi2}).
\end{proof}

\begin{lemma}
	Assume that the hypothesis of Lemma \ref{Global_1} and the hypothesis of $(H1)-(H4)$ holds. Then there exists $N, \epsilon>0$ such that the functional $$L(t) = NE(t) + \epsilon \Psi_1(t) + \Psi_2(t)$$ satisfies, 
	\begin{equation}
	\label{rel_LE}
	L \sim E,
	\end{equation}
	and for all $t\geq 0,$ there exists $m>0$ and $\epsilon_0 \in (0,1)$ such that
	\begin{equation}
	\label{est_L'}
	L'(t)\leq -mE(t) + c(b\circ \Delta u)(t) + c(b\circ \Delta u)^{\frac{1}{1+\epsilon_0}}(t) + c\int_{\Omega} h^2(u_t)dx,
	\end{equation}
	
\end{lemma}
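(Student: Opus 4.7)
The plan is to prove the equivalence $L \sim E$ and the differential inequality separately. For the former, I would show that $|\epsilon \Psi_1(t) + \Psi_2(t)| \leq c_1 E(t)$ for a constant $c_1$ independent of $N$. Each integrand in $\Psi_1$ and $\Psi_2$ splits into a product controlled by Young's and Cauchy--Schwarz inequalities, with the convolution factors $\int_0^t b(t-s)(u(t)-u(s))ds$ appearing in $\Psi_2$ bounded by $c(b\circ \Delta u)^{1/2}$ via Lemma \ref{est_b1}. After these estimates the right-hand side is a combination of $\|u_t\|_{\rho+2}^{\rho+2}$, $\|\Delta u\|_2^2$, $\|\Delta u_t\|_2^2$, $\|u\|_2^2$ and $(b\circ \Delta u)(t)$, and Lemma \ref{Global_1} ensures that $E(t)$ dominates every one of these positive pieces despite the sign-indefinite $-k\int u^2\ln|u|$ term inside $E$. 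Picking $N$ so large that $N-c_1 \geq N/2$ gives (\ref{rel_LE}).

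For the differential inequality I would sum $NE'(t)$, $\epsilon \Psi_1'(t)$ and $\Psi_2'(t)$ using (\ref{DE_fn}), (\ref{est_psi1}) and (\ref{est_psi2}). Fix $t_0>0$ such that $b_0:=\int_0^{t_0} b(s)\,ds>0$; for $t \geq t_0$ the combined coefficients of $\|u_t\|_{\rho+2}^{\rho+2}$ and $\|\Delta u_t\|_2^2$ become strictly negative once the parameters in (\ref{est_psi2}) are fixed in the order $\delta_1 \to \delta \to \delta_2$ (so as to control $2\delta_1(1-l)^2\|\Delta u\|_2^2$ and $c\delta_2 E(0)^\rho\|\Delta u_t\|_2^2$), and finally $\epsilon$ is taken smaller than $b_0$. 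At this point the only term not yet of the correct sign is $\epsilon k\int_\Omega u^2\ln|u|\,dx$ arising from $\Psi_1'(t)$.

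I would dispose of this logarithmic contribution by invoking the Logarithmic Sobolev inequality (the corollary for $H_0^2(\Omega)$) with the same parameter $a$ used in (\ref{assmup_a}): this replaces $k\int u^2\ln|u|$ by $\tfrac{k}{2}\|u\|_2^2\ln\|u\|_2^2 + \tfrac{kc_p a^2}{2\pi}\|\Delta u\|_2^2 - k(1+\ln a)\|u\|_2^2$. Hypothesis (H4) guarantees $kc_pa^2/(2\pi)<l$, so the $\|\Delta u\|_2^2$ piece is absorbed by the already-negative coefficient of $\|\Delta u\|_2^2$ (by shrinking $\epsilon$ further if necessary); and $\|u\|_2\leq \bar\rho$ from Lemma \ref{Global_1} together with the boundedness of $s\mapsto s\ln s$ on $(0,\bar\rho^2]$ forces $\|u\|_2^2\ln\|u\|_2^2\leq c\|u\|_2^2$, which is absorbed into the negative coefficient of $\|u\|_2^2$. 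Finally $N$ is chosen large (after $\epsilon$) so that $NE'(t)\leq \tfrac{N}{2}(b'\circ \Delta u)(t) - N\int_\Omega u_th(u_t)dx$ dominates the $-(c/\delta)(b'\circ \Delta u)$ contribution from $\Psi_2'$ and, in the linear regime of $h_1$, absorbs the $c\int_\Omega h^2(u_t)dx$ terms via Lemma \ref{est_h1}. The remaining $(b\circ \Delta u)(t)$, $(b\circ \Delta u)^{1/(1+\epsilon_0)}(t)$ and (in the nonlinear regime of $h_1$) $\int_\Omega h^2(u_t)dx$ are exactly what (\ref{est_L'}) allows on its right-hand side, while the surviving negative linear combination of $\|u_t\|_{\rho+2}^{\rho+2}$, $\|\Delta u_t\|_2^2$, $\|\Delta u\|_2^2$, $\|u\|_2^2$ and $(b\circ \Delta u)$ is recast as $-mE(t)$ for a suitable $m>0$; the time interval $[0,t_0]$ is handled by enlarging the constants using the boundedness of $E$.

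The main obstacle is the careful bookkeeping around the logarithmic term: the inequality $k<k_0=2\pi le^3/c_p$ in (H4) is precisely what admits an $a$ with $kc_pa^2/(2\pi)<l$ and simultaneously $e^{-3/2}<a$, which is what allows the Logarithmic Sobolev substitution to be absorbed without destroying any of the negative signs arranged in the preceding step. A secondary (but routine) subtlety is that the two regimes in (H3) for $h_1$ must be treated separately, with the nonlinear regime leaving $\int_\Omega h^2(u_t)dx$ on the right of (\ref{est_L'}) rather than absorbing it into $NE'(t)$.
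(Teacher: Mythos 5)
Your argument for (\ref{rel_LE}) is essentially the paper's: bound $|L-NE|\le \epsilon|\Psi_1|+|\Psi_2|$ term by term via Young and Cauchy--Schwarz, control the convolution factors through Lemma \ref{est_b1}, invoke the global bounds of Remark \ref{rmk_global}, and take $N$ large. For (\ref{est_L'}) the paper gives no proof at all --- it defers in one line to \cite{Mohammad_2018-1} --- so your sketch (sum $NE'+\epsilon\Psi_1'+\Psi_2'$ using (\ref{DE_fn}), (\ref{est_psi1}), (\ref{est_psi2}), use $\int_0^{t_0}b(s)\,ds=b_0>0$, fix $\delta_1,\delta,\delta_2,\epsilon$ in that order, then treat the logarithmic term by the Logarithmic Sobolev inequality) is supplying precisely the argument of the cited reference, and its overall architecture is the right one. (Minor caveat: on $[0,t_0]$ the uncompensated positive terms $\frac{\epsilon}{\rho+1}\|u_t\|_{\rho+2}^{\rho+2}+\epsilon\|\Delta u_t\|_2^2$ cannot be removed by ``enlarging constants''; the estimate genuinely holds only for $t\ge t_0$, which is how the subsequent theorems use it.)

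There is, however, one step in your handling of the logarithmic term that does not close as written. The Log--Sobolev substitution turns $\epsilon k\int_\Omega u^2\ln|u|\,dx$ into, among other things, $+\epsilon\frac{kc_pa^2}{2\pi}\|\Delta u\|_2^2$, while the only negative $\|\Delta u\|_2^2$ contribution available is $-\frac{\epsilon l}{4}\|\Delta u\|_2^2$ from (\ref{est_psi1}) (the terms from (\ref{est_psi2}) add small \emph{positive} multiples of $\|\Delta u\|_2^2$). Both the good and the bad terms scale linearly in $\epsilon$, so ``shrinking $\epsilon$ further'' cannot restore the sign; what is needed is $\frac{kc_pa^2}{2\pi}<\frac{l}{4}$, and an $a$ satisfying this together with $a>e^{-3/2}$ exists only when $k<k_0/4$, which is strictly stronger than (H4). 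To close the argument one must either rerun the proof of (\ref{est_psi1}) with $\eta$ and the Young parameter chosen so that the coefficient becomes $-(l-\eta')\|\Delta u\|_2^2$ with $\eta'$ arbitrarily small, or decouple the Log--Sobolev parameter used here from the one fixed in (\ref{assmup_a}). Likewise, the coefficient of $\|u\|_2^2$ produced by $\frac{k}{2}\|u\|_2^2\ln\|u\|_2^2-k(1+\ln a)\|u\|_2^2$, bounded via $\|u\|_2<\bar\rho$ from Lemma \ref{Global_1}, must be checked explicitly against the margin $-\epsilon\|u\|_2^2$ from (\ref{est_psi1}); it is not automatically dominated. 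You correctly identify this bookkeeping as the main obstacle, but the specific mechanism you propose for overcoming it is not the one that works.
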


\begin{proof}
	In order to prove (\ref{rel_LE}), we use the Sobolev embedding $H_0^1(\Omega) \hookrightarrow L^{\rho+2}(\Omega)$,\ $\int_{\Omega} |u_t|^{2(\rho +1)}dx\leq c\|\Delta u_t\|^2$ and Remark \ref{rmk_global}. From \eqref{est_L'}, we observe that
	\begin{equation*}
	\begin{array}{ll}
	|L(t)-NE(t)| & \leq \epsilon|\Psi_1| + |\Psi_2| \medskip \\
	& \leq \frac{\epsilon}{\rho+1}\int_{\Omega} |u_t|^{\rho+1}|u|dx + |\epsilon\int_{\Omega } \Delta u\Delta u_t dx|\medskip \\
	& \hspace{1cm} + | \frac{1}{\rho +1}|u_t|^{\rho}u_t\int_0^t b(t-s) (u(t)-u(s))dsdx| \medskip \\
	& \hspace{1cm}  + | \frac{1}{\rho +1}(\Delta u_t)^{2} \int_0^t b(t-s) (u(t)-u(s))dsdx| \medskip \\
	& \leq \frac{\epsilon}{\rho+2}\|u_t\|_{\rho+2}^{\rho+2} + \frac{\epsilon}{(\rho+1)(\rho+2)}\|u\|_{\rho+2}^{\rho+2} + \frac{\epsilon}{2}\|\Delta u\|^2+\frac{\epsilon}{2}\|\Delta u_t\|_2^2 \medskip \\
	& \hspace{1cm} + \frac{1}{2(\rho+1)}\|u_t\|_{2(\rho+1)}^{2(\rho+1)} + \frac{1-l}{2(\rho+1)}c_p(b\circ \Delta u)(t)+ \frac{1}{2}\|\Delta u_t\|_2^2\medskip \\
	& \hspace{1cm} + \frac{1-l}{2}(b\circ \Delta u)(t)\medskip \\
	& \leq c(1+\epsilon)E(t).
	\end {array} 
	\end{equation*}
	Therefore, by choosing $N$ large enough we obtain (\ref{rel_LE}). For the inequality (\ref{est_L'}), we follow similar lines given in \cite{Mohammad_2018-1}.
\end{proof}

\begin{remark}
	\label{rmk_bu}
	Since, 
	\begin{center}
		$ E(t) \geq J(t) \geq \frac{1}{2}(b \circ \Delta u)(t) $ that imply
		\medskip \\
		$ (b \circ \Delta u)(t) \leq 2E(t) \leq 2E(0).$
	\end{center}
	Hence, 
	\begin{equation}
	(b \circ \Delta u)(t)\leq (b \circ \Delta u)^{\frac{\epsilon _{0}}{1+\epsilon _{0}}}(t)(b \circ \Delta u)^{\frac{1}{1+\epsilon _{0}}}(t)\leq c(b \circ \Delta u)^{\frac{1}{1+\epsilon _{0}}}(t).
	\end{equation}
\end{remark}
\begin{remark}
	\label{rmk_xi_bu}
	If b is linear then, we have 
	\begin{equation}
	\begin{array}{ll}
	\xi (t)(b \circ \Delta u)^{\frac{1}{1+\epsilon _{0}}}(t)& =\left [ \xi^{\epsilon _{0}}(t) \xi (t)(b \circ \Delta u)(t) \right ]^{\frac{1}{1+ \epsilon _{0}}}
	\medskip \\
	& \leq \left [ \xi^{\epsilon _{0}}(0) \xi (t) (b \circ \Delta u)(t)\right ]^{\frac{1}{1+\epsilon _{0}}}
	\medskip \\
	& \leq c\left ( \xi(t)(b \circ \Delta u) (t)\right )^{\frac{1}{1+\epsilon _{0}}} \medskip \\
	& \leq c(-E'(t))^{\frac{1}{1+\epsilon _{0}}}
	\end {array} 
	\end{equation}
\end{remark}

\begin{theorem}
	\label{energy_est_linear_h}
	Let $(u_0,u_1)\in H_0^2(\Omega)\times H_0^2(\Omega)$. Assume that the hypothesis $(H1)-(H4)$ holds and $h_1$ is linear. Then for all $t\geq t_0$, we have 
	\begin{equation}
	\label{energy_est1}
	{\rm\ if\ B\ is\ linear\ then\ }\ E(t) \leq c\Big( 1+ \int_{t_0}^{t}\xi^{1+\epsilon_0}(s)ds \Big)^{-\frac{1}{\epsilon_0}},\ \forall t\geq t_0,
	\end{equation}
	and
	\begin{equation}
	{\rm\ if\ B\ is\ nonlinear\ then\ }\ E(t) \leq c \big( t-t_0 \big)^{\frac{1}{1+\epsilon}} \mathcal{K}_1^{-1}\Big( \frac{c}{\big( t-t_0 \big)^{\frac{1}{1+\epsilon}} \int_{t_1}^t\xi(s)ds} \Big),\ \forall t\geq t_1,
	\end{equation}
	where $\mathcal{K}_1(t)=t\mathcal{K}'(\epsilon_1t)$ and $\mathcal{K}(t)=\Big( \big(\bar{B}^{-1}\big)^{\frac{1}{1+\epsilon}} \Big)^{-1}(t).$
\end{theorem}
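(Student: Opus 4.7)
The plan is to build on the Lyapunov-type inequality (\ref{est_L'}) established in the previous lemma, then specialize to the two cases for $B$. As a first simplification, because $h_1$ is linear Lemma \ref{est_h1} gives $\int_\Omega h^2(u_t)dx \leq -cE'(t)$, and Remark \ref{rmk_bu} bounds $(b\circ\Delta u)(t)$ by a constant times $(b\circ\Delta u)^{1/(1+\epsilon_0)}(t)$. Hence the working inequality reduces to
\[
L'(t) \leq -mE(t) + c(b\circ\Delta u)^{1/(1+\epsilon_0)}(t) - cE'(t).
\]

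For the linear $B$ case I would multiply through by $\xi(t)$ and apply Remark \ref{rmk_xi_bu}, turning the memory term into $c(-E'(t))^{1/(1+\epsilon_0)}$. Setting $\mathcal{F}(t) := \xi(t)L(t) + c_1 E(t)$ with $c_1$ large, the monotonicity $\xi' \leq 0$ gives $\mathcal{F}\sim E$ and $\mathcal{F}'(t) \leq -m\xi(t)E(t) + c(-E'(t))^{1/(1+\epsilon_0)}$. Multiplying by $\xi^{\epsilon_0}(t)E^{\epsilon_0}(t)$ and invoking Young's inequality with conjugate exponents $(1+\epsilon_0)/\epsilon_0$ and $1+\epsilon_0$ produces $\xi^{\epsilon_0}E^{\epsilon_0}(-E')^{1/(1+\epsilon_0)} \leq \eta\,\xi^{1+\epsilon_0}E^{1+\epsilon_0} + C_\eta(-E')$. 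Choosing $\eta$ small to absorb half of the main term and noting $(\xi^{\epsilon_0}E^{\epsilon_0})' \leq 0$, the functional $\mathcal{H}(t) := \xi^{\epsilon_0}(t)E^{\epsilon_0}(t)\mathcal{F}(t) + c_2 E(t)$ (still equivalent to $E$) satisfies $\mathcal{H}'(t) \leq -c\,\xi^{1+\epsilon_0}(t)\mathcal{H}^{1+\epsilon_0}(t)$. Separation of variables yields $\mathcal{H}(t) \leq C\bigl(1+\int_{t_0}^t \xi^{1+\epsilon_0}(s)ds\bigr)^{-1/\epsilon_0}$, which gives (\ref{energy_est1}).

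For the nonlinear $B$ case Remark \ref{rmk_xi_bu} is not available, so I would use Lemma \ref{est_b_t} instead. Splitting $(b\circ\Delta u)(t) = \int_0^{t_1} + \int_{t_1}^t$, the contribution on $[0,t_1]$ is controlled by $-cE'(t)$ because $\xi(s)B(b(s))/b(s)$ is bounded below on that compact interval, while the tail is bounded by $\frac{t-t_1}{\delta}\bar{B}^{-1}\!\left(\frac{\delta M(t)}{(t-t_1)\xi(t)}\right)$ with $M(t)\leq -cE'(t)$. An analogous decomposition for $(b\circ\Delta u)^{1/(1+\epsilon_0)}$ introduces the composition $(\bar{B}^{-1})^{1/(1+\epsilon)}$, whose inverse is $\mathcal{K}$. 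Substituting these into the reduced inequality for $L'$ and applying a Jensen argument (using convexity of $\bar{B}$, hence concavity of $\bar{B}^{-1}$) produces a differential inequality in terms of $\mathcal{K}$; integrating this using the convex-conjugate machinery of \cite{Mohammad_2019} yields the stated rate $c(t-t_0)^{1/(1+\epsilon)}\mathcal{K}_1^{-1}\bigl(c/[(t-t_0)^{1/(1+\epsilon)}\int_{t_1}^t \xi(s)ds]\bigr)$.

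The main obstacle is the nonlinear $B$ case: the delicate interaction between the nonlinear estimate $\bar{B}^{-1}$ from Lemma \ref{est_b_t} and the extra fractional power $1/(1+\epsilon_0)$ carried by the logarithmic nonlinearity requires careful bookkeeping. In particular, one must verify that $\mathcal{K}$ inherits the right monotonicity and convexity from $\bar{B}$ so that $\mathcal{K}_1$ is invertible on the relevant interval, and one must balance the splitting point $t_1$, the parameter $\delta\in(0,1)$, and the auxiliary constant $\epsilon_1$ so that the Jensen applications and the subsequent integration of the implicit differential inequality go through without losing the optimal power.
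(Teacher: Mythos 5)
Your Case 1 argument is essentially identical to the paper's: absorb $\int_\Omega h^2(u_t)\,dx\leq -cE'(t)$ into a modified functional, multiply by $\xi$, invoke Remark \ref{rmk_xi_bu}, multiply by $\xi^{\epsilon_0}E^{\epsilon_0}$, apply Young's inequality with exponents $\frac{1+\epsilon_0}{\epsilon_0}$ and $1+\epsilon_0$, and integrate the resulting ODE; the only cosmetic difference is that you attach the weight $\xi$ to $L$ before absorbing $-cE'$, whereas the paper forms $L_1=L+cE$ first. For Case 2, however, you have correctly named the ingredients (Lemma \ref{est_b_t}, the function $\mathcal{K}=[(\bar B^{-1})^{1/(1+\epsilon_0)}]^{-1}$, the convex conjugate) but not carried out the step that actually produces the rate, deferring it to ``the convex-conjugate machinery of \cite{Mohammad_2019}.'' Concretely, the paper's proof at this point introduces the weighted functional
\[
L_2(t)=\mathcal{K}'\Big(\tfrac{\epsilon_1}{(t-t_0)^{1/(1+\epsilon_0)}}\tfrac{E(t)}{E(0)}\Big)L_1(t),
\]
applies the generalized Young inequality $ab\leq \mathcal{K}^*(a)+\mathcal{K}(b)$ with $a=\mathcal{K}'(\cdot)$ and $b=\mathcal{K}^{-1}(\alpha(t))$, multiplies by $\xi(t)$ and uses $\xi(t)\alpha(t)\leq -cE'(t)$ to form $L_3=\xi L_2+cE\sim E$, and finally integrates $L_3'$ and uses the monotonicity of $\tau\mapsto E(\tau)\mathcal{K}'(\cdot)$ to pull that factor out of $\int_{t_1}^t\xi(\tau)\,d\tau$, which is where $\mathcal{K}_1(\tau)=\tau\mathcal{K}'(\epsilon_1\tau)$ and its inverse enter. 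None of this is routine ``integration of an implicit differential inequality''; it is the core of the proof, and your own closing paragraph acknowledges that the invertibility of $\mathcal{K}_1$ and the balancing of $\delta$, $t_1$, and $\epsilon_1$ are unverified. Two smaller points: your splitting of $(b\circ\Delta u)$ at $t_1$ is a sensible refinement (the paper applies Lemma \ref{est_b_t} to the whole convolution without comment), and your proposed Jensen step is already built into Lemma \ref{est_b_t} rather than being an additional argument needed in the theorem. So the plan is the right one, but the nonlinear-$B$ half is an outline rather than a proof.
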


\begin{proof} We will divide the proof of this theorem into two cases. \\
	\textbf{Case 1:} $G$ is linear. Using \eqref{est_L'},\\
	\begin{equation*}
	\begin{array}{ll}
	L'(t) &\leq  -mE(t) + c(b\circ \Delta u)(t) + c(b\circ \Delta u)^{\frac{1}{1+\epsilon_0}}(t) + c\int_{\Omega} h^2(u_t)dx\medskip \\
	& \leq -mE(t) + c(b\circ \Delta u)(t) + c(b\circ \Delta u)^{\frac{1}{1+\epsilon_0}}(t)+c(-E'(t)).
	\end {array} 
	\end{equation*}
	Denote $L_1(t)=L(t)+cE(t)$, then the above inequality becomes
	\begin{equation}
	\label{ineq_L1}
	L_1'(t) \leq  -mE(t) + c(b\circ \Delta u)(t) + c(b\circ \Delta u)^{\frac{1}{1+\epsilon_0}}(t)
	\end{equation}
	Multiplying $\xi(t)$ to (\ref{ineq_L1}) and using Remarks \ref{rmk_bu} and \ref{rmk_xi_bu}, and for all $t\geq t_0$ we get
	\begin{equation*}
	\xi(t)L_1'(t)\leq -m\xi(t)E(t) + c(-E'(t))^{\frac{1}{1+\epsilon_0}}.
	\end{equation*}
	Multiplying the above inequality by $\xi^{\epsilon_0}(t)E^{\epsilon_0}(t)$ and using the Young's inequality  and for any $\epsilon_1>0$ and $t\geq t_0$, we obtain
	\begin{equation*}
	\begin{array}{ll}
	\xi^{1+\epsilon_0}(t)E^{\epsilon_0}(t)L_1'(t) &\leq  -m\xi^{1+\epsilon_0}(t)E^{1+\epsilon_0}(t) + c(\xi E)^{\epsilon_0}(t)(-E'(t))^{\frac{1}{1+\epsilon_0}} \medskip \\
	& \leq -m\xi^{1+\epsilon_0}(t)E^{1+\epsilon_0}(t) + c\Big( \epsilon_1\xi^{1+\epsilon_0}(t)E^{1+\epsilon_0}(t)-c_{\epsilon_1}E'(t) \Big) \medskip \\
	& \leq -(m-\epsilon_1 c)\xi^{1+\epsilon_0}(t)E^{1+\epsilon_0}(t) - cE'(t).
	\end {array} 
	\end{equation*}
	Denote $L_2(t)=\xi^{1+\epsilon_0}(t)E^{\epsilon_0}(t)L_1(t)+cE(t)$ and choosing $\epsilon_1 <\frac{m}{c}$ and using the properties of $\xi$ and $E$ to get
	\begin{equation}
	\label{ineq_L2}
	L_2'(t)\leq -c\xi^{1+\epsilon_0}(t)E^{1+\epsilon_0}(t), \ \ \forall t\geq t_0,
	\end{equation}
	where $c_1 = m-\epsilon_1 c$. Since, $L_2 \sim E$ and from (\ref{ineq_L2}), it is easy to see that 
	\[
	E'(t)\leq -c\xi^{1+\epsilon_0}(t)E^{1+\epsilon_0}(t), \ \ \forall t\geq t_0,
	\] 
	Integrating the above inequality from $(t_0,t)$, we obtain (\ref{energy_est1}).\medskip \\
	\textbf{Case 2:} $G$ is nonlinear. Using \eqref{est_L'},\\
	\begin{equation*}
	\begin{array}{ll}
	L'(t) &\leq  -mE(t) + c(b\circ \Delta u)(t) + c(b\circ \Delta u)^{\frac{1}{1+\epsilon_0}}(t) + c\int_{\Omega} h^2(u_t)dx\medskip \\
	& \leq -mE(t) + c(b\circ \Delta u)(t) + c(b\circ \Delta u)^{\frac{1}{1+\epsilon_0}}(t)+c(-E'(t)).
	\end {array} 
	\end{equation*}
	Denote $L_1(t)=L(t)+cE(t)$, then using Lemma \ref{est_b_t} and Remark \ref{rmk_bu} the above inequality becomes
	\begin{equation*}
	\begin{array}{ll}
	L_1'(t) & \leq  -mE(t) + c(b\circ \Delta u)(t) + c(b\circ \Delta u)^{\frac{1}{1+\epsilon_0}}(t)\medskip \\
	& \leq -mE(t) + c(b\circ \Delta u)^{\frac{1}{1+\epsilon_0}}(t)\medskip \\
	& \leq -mE(t) + c(t-t_0)^{\frac{1}{1+\epsilon_0}} \bar{B}^{-1}\Big( \frac{\delta M(t)}{(t-t_0)\xi(t)} \Big)^{\frac{1}{1+\epsilon_0}}\medskip \\
	& \leq -mE(t) + c(t-t_0)^{\frac{1}{1+\epsilon_0}} \bar{B}^{-1}\Big( \frac{\delta M(t)}{(t-t_0)^{\frac{1}{1+\epsilon_0}}\xi(t)} \Big)^{\frac{1}{1+\epsilon_0}},
	\end {array} 
	\end{equation*}
	the last line in the above inequality follows from the fact that, there exists $t_1> t_0$ such that $\frac{1}{t-t_0}<1$ when ever $t>t_1$. Hence we have 
	$$\bar{B}^{-1}\Big( \frac{\delta M(t)}{(t-t_0)\xi(t)} \Big) \leq \bar{B}^{-1}\Big( \frac{\delta M(t)}{(t-t_0)^{\frac{1}{1+\epsilon_0}}\xi(t)} \Big)\ \ \forall t>t_1.$$
	Denote 
	\begin{equation}
	\label{notation}
	\mathcal{K}(t)=\Big[\Big( \bar{B}^{-1} \Big)^{\frac{1}{1+\epsilon_0}} \Big]^{-1}(t),\quad \alpha(t)=\frac{\delta M(t)}{(t-t_0)^{\frac{1}{1+\epsilon_0}}\xi(t)}.
	\end{equation}
	From the definition of $\mathcal{K}$, it is clear that $\mathcal{K}'(t)>0$ and $\mathcal{K}''(t)>0$ on $(0,r]$ where $r = \min\{r_1,r_2\}$. Using these notations, we obtain that for all $t\geq t_1$,
	\begin{equation}
	\label{new_esti_L1}
	L_1'(t) \leq  -mE(t) + c(t-t_0)^{\frac{1}{1+\epsilon_0}}\mathcal{K}^{-1}\big(\alpha(t)\big).
	\end{equation}
	Now, we define the functional $L_2$ as follows
	\begin{equation*}
	L_2(t) := \mathcal{K}'\Bigg(\frac{\epsilon_1}{(t-t_0)^{\frac{1}{1+\epsilon_0}}}\frac{E(t)}{E(0)}
	\Bigg)L_1(t),
	\end{equation*} 
	for some $\epsilon_1\in (0,r)$. Using the fact that $\mathcal{K}'(t)>0,$ $\mathcal{K}''(t)>0$ and $E'(t)\leq 0$ on $(0,r]$, and using \eqref{new_esti_L1} we obtain
	\begin{multline}
	\label{est_L2}
	L_2'(t) \leq  -mE(t)\mathcal{K}'\Bigg(\frac{\epsilon_1}{(t-t_0)^{\frac{1}{1+\epsilon_0}}}\frac{E(t)}{E(0)}\Bigg) + \\ c(t-t_0)^{\frac{1}{1+\epsilon_0}}\mathcal{K}'\Bigg(\frac{\epsilon_1}{(t-t_0)^{\frac{1}{1+\epsilon_0}}}\frac{E(t)}{E(0)}\Bigg) \mathcal{K}^{-1}\big(\alpha(t)\big), \forall t\geq t_1,
	\end{multline} 
	and 
	\begin{equation}
	L_2 \sim E.
	\end{equation}
	Let $\mathcal{K}^*$ denote the convex conjugate of $\mathcal{K}$ in the sense of Young (see \cite{young_ineq}), then we have 
	\[
	\mathcal{K}^*(\tau) = \tau (\mathcal{K'})^{-1}(\tau)-\mathcal{K}\big([\mathcal{K'}]^{-1}(\tau)\big), {\rm if\ } \tau\in (0,\mathcal{K'}(r)),
	\]
	here $\mathcal{K}^*$ satisfies the generalized Young Inequality:
	\[
	ab\leq \mathcal{K}^*(a)+\mathcal{K}(b),\ {\rm if\ } a\in (0,\mathcal{K'}(r)),\ b\in (0,r].
	\]
	So, by assuming $a=\mathcal{K}'\Bigg(\frac{\epsilon_1}{(t-t_0)^{\frac{1}{1+\epsilon_0}}}\frac{E(t)}{E(0)}\Bigg)$ and $b=\mathcal{K}^{-1}(\alpha(t))$ and using (\ref{est_L2}) we obtain
	\begin{equation*}
	\begin{array}{ll}
	L_2'(t) & \leq  -mE(t)\mathcal{K}'\Bigg(\frac{\epsilon_1}{(t-t_0)^{\frac{1}{1+\epsilon_0}}}\frac{E(t)}{E(0)}\Bigg)  \medskip \\
	& \quad + c(t-t_0)^{\frac{1}{1+\epsilon_0}}\mathcal{K}^*\Bigg[ \mathcal{K}'\Bigg(\frac{\epsilon_1}{(t-t_0)^{\frac{1}{1+\epsilon_0}}}\frac{E(t)}{E(0)}\Bigg) \Bigg] +  c(t-t_0)^{\frac{1}{1+\epsilon_0}}\alpha(t)\medskip \\
	& \leq -mE(t)\mathcal{K}'\Bigg(\frac{\epsilon_1}{(t-t_0)^{\frac{1}{1+\epsilon_0}}}\frac{E(t)}{E(0)}\Bigg) + c\epsilon_1\frac{E(t)}{E(0)}\mathcal{K}'\Bigg(\frac{\epsilon_1}{(t-t_0)^{\frac{1}{1+\epsilon_0}}}\frac{E(t)}{E(0)}\Bigg) \medskip \\
	& \quad + c(t-t_0)^{\frac{1}{1+\epsilon_0}}\alpha(t)
	\end {array} 
	\end{equation*} 
	Multiplying the above inequality by $\xi(t)$ and using \eqref{rel_M_E}and \eqref{notation} for all $t\geq t_1$ we get 
	\begin{multline*}
	\xi(t)L_2'(t) \leq -m\xi(t)E(t)\mathcal{K}'\Bigg(\frac{\epsilon_1}{(t-t_0)^{\frac{1}{1+\epsilon_0}}}\frac{E(t)}{E(0)}\Bigg) \medskip \\ +c\epsilon_1\xi(t)\frac{E(t)}{E(0)}\mathcal{K}'\Bigg(\frac{\epsilon_1}{(t-t_0)^{\frac{1}{1+\epsilon_0}}}\frac{E(t)}{E(0)}\Bigg) -cE'(t).
	\end{multline*}
	By setting $L_3:=\xi L_2+cE$ (notice that $L_3\sim E$), we get for all $t\geq t_1$
	\[
	L_3'(t) \leq -\big( mE(0)-c\epsilon_1 \big)\xi(t)\frac{E(t)}{E(0)}\mathcal{K}'\Bigg(\frac{\epsilon_1}{(t-t_0)^{\frac{1}{1+\epsilon_0}}}\frac{E(t)}{E(0)}\Bigg)
	\]
	by choosing $\epsilon_1$ such that $ mE(0)-c\epsilon_1 >0$, we obtain
	\begin{equation}\label{est_L3}
	L_3'(t) \leq -c\xi(t)\frac{E(t)}{E(0)}\mathcal{K}'\Bigg(\frac{\epsilon_1}{(t-t_0)^{\frac{1}{1+\epsilon_0}}}\frac{E(t)}{E(0)}\Bigg)
	\end{equation}
	Integrating (\ref{est_L3}) from $t_1$ to $t$ to get, 
	\[
	\displaystyle \int_{t_1}^t c\xi(\tau)\frac{E(\tau)}{E(0)}\mathcal{K}'\Bigg(\frac{\epsilon_1}{(\tau-t_0)^{\frac{1}{1+\epsilon_0}}}\frac{E(\tau)}{E(0)}\Bigg)d\tau \leq - \int_{t_1}^t L_3'(t)dt \leq L_3(t_1)
	\]
	Using the properties of $\mathcal{K}, E$ and $\forall t\geq t_1,$
	\begin{multline*}
	c\frac{E(t)}{E(0)}\mathcal{K}'\Bigg(\frac{\epsilon_1}{(t-t_0)^{\frac{1}{1+\epsilon_0}}}\frac{E(t)}{E(0)}\Bigg) \displaystyle\int_{t_1}^t \xi(\tau)d\tau \\
	\leq c\xi(\tau)\frac{E(\tau)}{E(0)}\mathcal{K}'\Bigg(\frac{\epsilon_1}{(\tau-t_0)^{\frac{1}{1+\epsilon_0}}}\frac{E(\tau)}{E(0)}\Bigg)d\tau \leq L_3(t_1).
	\end{multline*}
	Hence, 
	\[
	c\Bigg(\frac{1}{(t-t_0)^{\frac{1}{1+\epsilon_0}}}\frac{E(t)}{E(0)}\Bigg)\mathcal{K}'\Bigg(\frac{\epsilon_1}{(t-t_0)^{\frac{1}{1+\epsilon_0}}}\frac{E(t)}{E(0)}\Bigg)\int_{t_1}^t \xi(\tau)d\tau \leq \frac{c_1}{(t-t_0)^{\frac{1}{1+\epsilon_0}}}.
	\]
	Setting $\mathcal{K}_1(\tau)=\tau\mathcal{K}'(\epsilon_1\tau)$, the above inequality reduces to
	\begin{equation}
	\label{final_energy_estimate}
	c\mathcal{K}_1\Bigg(\frac{\epsilon_1}{(t-t_0)^{\frac{1}{1+\epsilon_0}}}\frac{E(t)}{E(0)}\Bigg)\int_{t_1}^t \xi(\tau)d\tau \leq \frac{c_1}{(t-t_0)^{\frac{1}{1+\epsilon_0}}},
	\end{equation}
	After rearranging the terms in \eqref{final_energy_estimate}, we conclude that 
	\[
	E(t) \leq c(t-t_0)^{\frac{1}{1+\epsilon_0}} \mathcal{K}_1^{-1}\Bigg( \frac{c_1}{(t-t_0)^{\frac{1}{1+\epsilon_0}}\int_{t_1}^t \xi(\tau)d\tau} \Bigg), \ \forall t\geq t_1.
	\]
	Hence, the theorem is proved.
\end{proof}

\begin{remark}
	If $H(0)=0$ and $H$ is strictly convex on $(0,r]$, then 
	\begin{equation}
	\label{convex_prop}
	H(\theta s) \leq \theta H(s),\ \theta\in [0,1],\ {\rm \ and\ } s\in (0,r].
	\end{equation}
\end{remark}

\begin{lemma}
	\label{lemma_est_barH}
	Assume that there exists $C$ such that $\tau h(\tau)\leq C$. Then for some constant $c>0$, we have the following estimate:
	\begin{equation}
	\label{est_barH}
	\bar{H}^{-1}(G(t)) \leq c(t-t_1)^{\frac{1}{1+\epsilon}} \bar{H}^{-1}\Bigg( \frac{M(t)}{(t-t_1)^{\frac{1}{1+\epsilon}}}\Bigg)^\frac{1}{1+\epsilon}.
	\end{equation}		
\end{lemma}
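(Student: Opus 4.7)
The plan is to emulate the argument used for Lemma \ref{est_b_t}, with the convex function $\bar H$ replacing $\bar B$ and the damping dissipation $G(t)$ replacing the memory quantity that appears there. The proof will rest on three ingredients: the pointwise relation $H(h^2(s)) \le sh(s)$ for $|s|\le\epsilon$ already extracted in Case 2 of the local existence proof; the concavity of $\bar H^{-1}$ on the relevant interval (which is automatic because $\bar H$ is convex with $\bar H(0)=0$), allowing Jensen's inequality on averages; and the boundedness hypothesis $\tau h(\tau)\le C$, which keeps every argument of $\bar H^{-1}$ inside the interval where the extension from Remark 1.2 is valid.

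First I would cast $G(t)$ as the mean of $u_t h(u_t)$ over the probability space $(\Omega_1, dx/|\Omega_1|)$, apply Jensen's inequality for the concave $\bar H^{-1}$, and combine this with the pointwise bound $h^2(u_t)\le \bar H^{-1}(u_t h(u_t))$ to reduce $\bar H^{-1}(G(t))$ to a spatial integral of $h^2(u_t)$. Second, I would lift this pointwise-in-$t$ estimate to the time slab $(t_1,t)$ by averaging against $ds/(t-t_1)$. A second application of Jensen for $\bar H^{-1}$, together with the energy identity \eqref{DE_fn} and the bound $M(t)\le -cE'(t)$ from \eqref{rel_M_E}, transfers the time integral of the $h$-dissipation into an expression controlled by the memory dissipation $M(t)$.

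Finally, to produce the fractional exponent $1/(1+\epsilon)$ in the three places where it appears on the right-hand side of \eqref{est_barH}, I would apply H\"older's inequality on $(t_1,t)$ with conjugate exponents $1+\epsilon$ and $(1+\epsilon)/\epsilon$. The H\"older split creates the $(t-t_1)^{1/(1+\epsilon)}$ prefactor, rescales the argument of $\bar H^{-1}$ by the same factor, and leaves a residual $(1+\epsilon)$-power that, after taking the corresponding root, matches the outer exponent $\bar H^{-1}(\cdot)^{1/(1+\epsilon)}$. The main obstacle will be choreographing the two Jensen applications with the H\"older split so that the exponent $1/(1+\epsilon)$ appears simultaneously as an outer prefactor, an inner denominator, and an outer power, while verifying via $\tau h(\tau)\le C$ that every argument of $\bar H^{-1}$ stays in the concavity domain of the extension $\bar H$ from Remark 1.2; losing this control at any single step would force one to fall back to the trivial bound and forfeit the precise form of \eqref{est_barH}.
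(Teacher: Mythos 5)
There is a genuine gap: your proposal misidentifies the mechanism that produces every piece of the right-hand side of \eqref{est_barH}. The paper's argument is a direct two-step computation that never touches Jensen or H\"older. First, the prefactor $(t-t_1)^{\frac{1}{1+\epsilon}}$ and the denominator inside $\bar H^{-1}$ come from the convexity scaling \eqref{convex_prop}, $H(\theta s)\le \theta H(s)$, applied with $\theta=(t-t_1)^{-\frac{1}{1+\epsilon}}$; in terms of the inverse this reads $\bar H^{-1}(z)\le \theta^{-1}\bar H^{-1}(\theta z)$, which immediately yields \eqref{est_HG1}. Second, the outer exponent $\frac{1}{1+\epsilon}$ on $\bar H^{-1}(\cdot)$ is obtained by the trivial factorization $X = X^{\frac{\epsilon}{1+\epsilon}}X^{\frac{1}{1+\epsilon}}$ together with the observation that $\tau h(\tau)\le C$ forces $M(t)\le C$, hence $\bar H^{-1}\big(M(t)/(t-t_1)^{\frac{1}{1+\epsilon}}\big)^{\frac{\epsilon}{1+\epsilon}}\le c$. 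In your plan the hypothesis $\tau h(\tau)\le C$ is relegated to a domain-of-definition check, so you lose the only step that actually generates the fractional outer power; H\"older's inequality in time cannot produce a fractional power of the single number $\bar H^{-1}(\cdot)$, since there is no product structure under an integral to split.

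Two further steps in your outline would fail if carried out. Jensen's inequality for the concave $\bar H^{-1}$ over $(\Omega_1,dx/|\Omega_1|)$, combined with $h^2(s)\le H^{-1}(sh(s))$, gives $\frac{1}{|\Omega_1|}\int_{\Omega_1}h^2(u_t)\,dx\le \bar H^{-1}(G(t))$ --- a \emph{lower} bound on $\bar H^{-1}(G(t))$, which is the content of Lemma \ref{est_h1} and is useless for the upper bound \eqref{est_barH}. And there is no inequality transferring the time-integrated frictional dissipation $\int_{t_1}^t\int_\Omega u_t h(u_t)\,dx\,ds$ into the memory quantity $M(t)$: both $G(t)$ and $M(t)$ are separately dominated by $-cE'(t)$ via \eqref{DE_fn} and \eqref{rel_M_E}, but neither controls the other, so your second Jensen application has no valid target. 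The lemma is really an elementary pointwise manipulation of $\bar H^{-1}$ using \eqref{convex_prop} and boundedness of $M$, not an averaging argument in the style of Lemma \ref{est_b_t}.
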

\begin{proof}
	Since, $\lim\limits_{t\rightarrow \infty} \frac{1}{t-t_1}=0,\ \exists t_2$ such that $\frac{1}{t-t_1}<1$ whenever $t>t_2$. Using the strict convexity and strictly increasing properties of $\bar{H}$, with $\theta = \frac{1}{(t-t_1)^{\frac{1}{1+\epsilon}}} <1$ and using (\ref{convex_prop}), we get
	\begin{equation}
	\label{est_HG1}
	\bar{H}^{-1}(G(t)) \leq (t-t_1)^{\frac{1}{1+\epsilon}} \bar{H}^{-1}\Bigg( \frac{M(t)}{(t-t_1)^{\frac{1}{1+\epsilon}}}\Bigg).
	\end{equation}
	Since, $\tau h(\tau)\leq C$ it is easy to see that $M(t)\leq C$ and also $\frac{M(t)}{(t-t_1)^{\frac{1}{1+\epsilon}}} \leq C$. Therefore 
	
	\begin{equation}
	\label{est_HG2}
	\begin{array}{ll}
	\bar{H}^{-1}\Bigg( \frac{M(t)}{(t-t_1)^{\frac{1}{1+\epsilon}}}\Bigg) & = \bar{H}^{-1}\Bigg( \frac{M(t)}{(t-t_1)^{\frac{1}{1+\epsilon}}}\Bigg)^\frac{\epsilon}{1+\epsilon} \bar{H}^{-1}\Bigg( \frac{M(t)}{(t-t_1)^{\frac{1}{1+\epsilon}}}\Bigg)^\frac{1}{1+\epsilon}\medskip \\
	&\leq c\bar{H}^{-1}\Bigg( \frac{M(t)}{(t-t_1)^{\frac{1}{1+\epsilon}}}\Bigg)^\frac{1}{1+\epsilon}
	\end {array} 
	\end{equation}
	Therefore (\ref{est_barH}) follows from \eqref{est_HG1} and \eqref{est_HG2}. Hence, the lemma is proved.
\end{proof}

\begin{theorem}
	Let $(u_0,u_1)\in H_0^2(\Omega)\times H_0^2(\Omega)$ and $h_1, B$ are nonlinear. Assume that the hypothesis of Lemma \ref{lemma_est_barH} and the hypothesis $(H1)-(H4)$ holds. Then for all $t\geq t_1$, we have 
	\begin{equation}
	\label{energy_est_nonlinear}
	E(t) \leq c \big( t-t_0 \big)^{\frac{1}{1+\epsilon}} W_2^{-1}\Big( \frac{c}{\big( t-t_0 \big)^{\frac{1}{1+\epsilon}} \int_{t_0}^t\xi(s)ds} \Big),
	\end{equation}
	where $W_2(t)=tW'(\epsilon_1t)$ and $W(t)=\Big( \big(\bar{B}^{-1}\big)^{\frac{1}{1+\epsilon}} + \big(\bar{H}^{-1}\big)^{\frac{1}{1+\epsilon}} \Big)^{-1}(t).$
\end{theorem}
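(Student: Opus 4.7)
The plan is to proceed along the same lines as Case 2 of Theorem \ref{energy_est_linear_h}, but now with the additional complication that the damping term $\int_\Omega h^2(u_t)\,dx$ in \eqref{est_L'} cannot be dominated by $-cE'(t)$ alone; Lemma \ref{est_h1} only gives $\int_\Omega h^2(u_t)\,dx \leq cH^{-1}(G(t)) - cE'(t)$, introducing a second source of nonlinearity that must be reconciled with the $\bar{B}^{-1}$ term coming from the relaxation kernel.

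The first stage will be to substitute the bound from Lemma \ref{est_h1} together with Lemma \ref{est_b_t} and Remark \ref{rmk_bu} into \eqref{est_L'} and absorb the $-cE'(t)$ contribution by setting $L_1(t) := L(t) + cE(t)$, so that
\begin{equation*}
L_1'(t) \leq -mE(t) + c(t-t_0)^{\frac{1}{1+\epsilon}} \bar{B}^{-1}\!\left(\alpha(t)\right)^{\frac{1}{1+\epsilon}} + c\bar{H}^{-1}(G(t)),
\end{equation*}
where $\alpha(t) := \frac{\delta M(t)}{(t-t_0)^{\frac{1}{1+\epsilon}}\xi(t)}$. I would then apply Lemma \ref{lemma_est_barH} to replace $\bar{H}^{-1}(G(t))$ by $c(t-t_1)^{\frac{1}{1+\epsilon}}\bar{H}^{-1}\!\left(\frac{M(t)}{(t-t_1)^{\frac{1}{1+\epsilon}}}\right)^{\frac{1}{1+\epsilon}}$ and, after a routine rescaling so that both arguments agree up to multiplicative constants with $\alpha(t)$, merge the two fractional inverses via the definition of $W$. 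This collapses the right-hand side to
\begin{equation*}
L_1'(t) \leq -mE(t) + c(t-t_0)^{\frac{1}{1+\epsilon}} W^{-1}(\alpha(t)).
\end{equation*}

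From here the argument is structurally identical to the final portion of Theorem \ref{energy_est_linear_h}. The next step will be to introduce
\[
L_2(t) := W'\!\left(\frac{\epsilon_1}{(t-t_0)^{\frac{1}{1+\epsilon}}}\frac{E(t)}{E(0)}\right) L_1(t),
\]
verify $L_2 \sim E$ using convexity of $W$, and apply the generalized Young inequality with the convex conjugate $W^*$ (choosing $a = W'(\cdots)$ and $b = W^{-1}(\alpha(t))$) to dispose of the $W^{-1}(\alpha(t))$ contribution. Multiplying by $\xi(t)$, using $\xi(t) M(t) \leq -cE'(t)$ from \eqref{rel_M_E}, and setting $L_3 := \xi L_2 + cE$ (still equivalent to $E$), one arrives at a differential inequality of the form
\[
L_3'(t) \leq -c\,\xi(t)\,\frac{E(t)}{E(0)}\,W'\!\left(\frac{\epsilon_1}{(t-t_0)^{\frac{1}{1+\epsilon}}}\frac{E(t)}{E(0)}\right), \quad \forall t \geq t_1.
\]
Integrating over $[t_1,t]$, exploiting monotonicity of $E$ and of $W'$ to pull the $W'$-factor outside, and finally inverting $W_2(\tau) = \tau W'(\epsilon_1\tau)$ will give the claimed bound \eqref{energy_est_nonlinear}.

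The main obstacle will be the construction and exploitation of $W$ in the first stage. One has to verify that $(\bar{B}^{-1})^{\frac{1}{1+\epsilon}} + (\bar{H}^{-1})^{\frac{1}{1+\epsilon}}$ is strictly increasing and concave near $0$, so that its inverse $W$ is strictly convex on some interval $(0,r]$ with $r = \min\{r_1, r_2\}$; only then is the Legendre transform $W^*$ well defined and the generalized Young inequality applicable. One must also verify that a single argument $\alpha(t)$ can serve for both the $\bar{B}^{-1}$ and $\bar{H}^{-1}$ pieces, which amounts to a scaling estimate of the form $\frac{M(t)}{(t-t_1)^{1/(1+\epsilon)}} \leq c\,\alpha(t)$ that follows from $\xi(t)\leq \xi(0)$ and $M(t) \leq cE(0)$. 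Once $W$ is in hand, the remainder of the proof is a mechanical transcription of the scheme already executed for the linear-$h_1$ case.
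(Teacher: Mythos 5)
Your proposal follows essentially the same route as the paper: the same substitution of Lemma \ref{est_h1}, Lemma \ref{est_b_t} and Lemma \ref{lemma_est_barH} into \eqref{est_L'}, the same functional $W$ merging the two fractional inverses, and the same $L_1,L_2,L_3$/generalized-Young/$W_2$ machinery transcribed from Case 2 of Theorem \ref{energy_est_linear_h}. The only cosmetic difference is that the paper feeds both inverses the single argument $\beta(t)=\max\bigl\{\frac{\delta M(t)}{(t-t_0)^{1/(1+\epsilon_0)}\xi(t)},\frac{M(t)}{(t-t_1)^{1/(1+\epsilon)}}\bigr\}$ and uses monotonicity, rather than your rescaling of both arguments to a common $\alpha(t)$, which accomplishes the same thing.
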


\begin{proof} Using (\ref{est_L'}) and Lemma (\ref{est_h1}), observe that
	\begin{equation*}
	\begin{array}{ll}
	L'(t) &\leq  -mE(t) + c(b\circ \Delta u)(t) + c(b\circ \Delta u)^{\frac{1}{1+\epsilon_0}}(t) + c\int_{\Omega} h^2(u_t)dx\medskip \\
	& \leq -mE(t) + c(b\circ \Delta u)(t) + c(b\circ \Delta u)^{\frac{1}{1+\epsilon_0}}(t)+cH^{-1}(M(t))+c(-E'(t)).
	\end {array} 
	\end{equation*}
	Denote $L_1(t)=L(t)+cE(t)$, then using Remark \ref{rmk_bu}, Lemma's \ref{est_b_t} and \ref{lemma_est_barH} and for all $t\geq t_1$ the above inequality becomes
	\begin{equation*}
	\begin{array}{ll}
	L_1'(t) & \leq  -mE(t) + c(b\circ \Delta u)(t) + c(b\circ \Delta u)^{\frac{1}{1+\epsilon_0}}(t)+cH^{-1}(M(t))\medskip \\
	& \leq -mE(t) + c(b\circ \Delta u)^{\frac{1}{1+\epsilon_0}}(t)+cH^{-1}(M(t))\medskip \\
	& \leq -mE(t) + c(t-t_0)^{\frac{1}{1+\epsilon_0}} \bar{B}^{-1}\Big( \frac{\delta M(t)}{(t-t_0)\xi(t)} \Big)^{\frac{1}{1+\epsilon_0}}\medskip \\
	& \hspace{3cm}  +  c(t-t_0)^{\frac{1}{1+\epsilon}} \bar{H}^{-1}\Bigg( \frac{M(t)}{(t-t_0)^{\frac{1}{1+\epsilon}}}\Bigg)^\frac{1}{1+\epsilon}\medskip \\
	& \leq -mE(t) + c(t-t_0)^{\frac{1}{1+\epsilon_0}} \bar{B}^{-1}\Big( \frac{\delta M(t)}{(t-t_0)^{\frac{1}{1+\epsilon_0}}\xi(t)} \Big)^{\frac{1}{1+\epsilon_0}}\medskip \\
	& \hspace{3cm}  
	+  c(t-t_0)^{\frac{1}{1+\epsilon}} \bar{H}^{-1}\Bigg( \frac{M(t)}{(t-t_0)^{\frac{1}{1+\epsilon}}}\Bigg)^\frac{1}{1+\epsilon},
	\end {array} 
	\end{equation*}
	Denote
	\begin{equation}
	\label{notation_1}
	W(t)=\Big( \big(\bar{B}^{-1}\big)^{\frac{1}{1+\epsilon}} + \big(\bar{H}^{-1}\big)^{\frac{1}{1+\epsilon}} \Big)^{-1}(t),
	\end{equation}
	and
	\begin{equation}
	\label{notation_2}
	\beta(t)=\max \Biggl\{ \frac{\delta M(t)}{(t-t_0)^{\frac{1}{1+\epsilon_0}}\xi(t)}, \frac{M(t)}{(t-t_1)^{\frac{1}{1+\epsilon}}} \Biggr\}.
	\end{equation}
	From the definition of $W$, it is clear that $W'(t)>0$ and $W''(t)>0$ on $(0,r]$, where $r=\min\{r_1,r_2\}$. Using these notations, we obtain for all $t\geq t_1$,
	\begin{equation}
	L_1'(t) \leq  -mE(t) + c(t-t_0)^{\frac{1}{1+\epsilon_0}}W^{-1}\big(\beta(t)\big).
	\end{equation}
	To conclude (\ref{energy_est_nonlinear}), we follow the similar lines as shown in the proof of Theorem \ref{energy_est_linear_h}. This completes the proof of this theorem.  
\end{proof}

\bibliography{existence}

\begin{thebibliography}{10}

\bibitem{Benaissa_2006}
Benaissa A. and Mimouni S.
\newblock Energy decay of solutions of wave equation of $p$ laplacian type with
  a weakly nonlinear dissipation.
\newblock {\em Journal of Inequalities in Pure and Applied Mathematics},
  7(1):Art. 15, 2006.

\bibitem{mohd_bvp_2019}
Mohammad~M. Al-Gharabli.
\newblock New general decay results for a viscoelastic plate equation with a
  logarithmic nonlinearity.
\newblock {\em Boundary Value Problems}, 194(2019), 2019.

\bibitem{mohd_2021}
Mohammad~M. Al-Gharabli, Adel~M. Al-Mahdi, and Mohammad Kafini.
\newblock Global existence and new decay results of a viscoelastic wave
  equation with variable exponent and logarithmic nonlinearities.
\newblock {\em AIMS Mathematics}, 6(9):10105--10129, 2021.

\bibitem{adel_bvp_2020}
Adel~M. Al-Mahdi.
\newblock Stability result of a viscoelastic plate equation with past history
  and a logarithmic nonlinearity.
\newblock {\em Boundary Value Problems}, 84(2020), 2020.

\bibitem{di_shang_bvp}
Di~H. and Shang Y.
\newblock Global existence and asymptotic behavior of solutions for the double
  dispersive-dissipative wave equation with nonlinear damping and source terms.
\newblock {\em Boundary Value Problems}, 29(2015), 2015.

\bibitem{Bi-Bi_1975}
Bialynicki-Birula I and Mycielski J.
\newblock Wave equations with logarithmic nonlinearities.
\newblock {\em Bulletin de l'Academie Polonaise des Sciences. Serie des
  Sciences, Mathematiques, Astronomiques et Physiques}, 23(4):461--466, 1975.

\bibitem{Bi-Bi_1976}
Bialynicki-Birula I and Mycielski J.
\newblock Nonlinear wave mechanics.
\newblock {\em Annals of Physics}, 100(1-2):62--93, 1976.

\bibitem{lasi_1992}
Lasiecka I.
\newblock Exponential decay rates for the solutions of euler-bernoulli
  equations with boundary dissipation occurring in the moments only.
\newblock {\em Journal of Differential Equations}, 95(1), 169-182 1992.

\bibitem{Lasiecka_1993}
Lasiecka I. and Tataru D.
\newblock Uniform boundary stabilization of semilinear wave equations with
  nonlinear boundary damping.
\newblock {\em Differential Integral Equations}, 6(3):507--533, 1993.

\bibitem{lagnese_book1989}
Lagnese J.
\newblock {\em Asymptotic energy estimates for Kirchhoff plates subject to weak
  viscoelastic damping}, volume~91.
\newblock Birhauser, 1989.

\bibitem{lagnese_1989_book_plates}
Lagnese J.
\newblock {\em Boundary stabilization of thin plates}.
\newblock Philadelphia: SIAM, 1989.

\bibitem{M_rivera_1996}
Muñoz~Rivera J., Lapa E.C., and Barreto R.
\newblock Decay rates for viscoelastic paltes with memory.
\newblock {\em Journal of Elasticity}, 44:61--87, 1996.

\bibitem{Bartkowski_2008}
Bartkowski K. and Gorka P.
\newblock One-dimensional klein-gordon equation with logarithmic
  nonlinearities.
\newblock {\em Journal of Physics A}, 41(35):355201 (11pp), 2008.

\bibitem{kafini_salim_2018}
Mohammad Kafini and Salim Messaoudi.
\newblock Local existence and blow up of solutions to a logarithmic nonlinear
  wave equation with delay.
\newblock {\em Applicable Analysis}, pages 530--547, 2018.

\bibitem{liu_2020}
Gongwei Liu.
\newblock The existence, general decay and blow-up for a plate equation with
  nonlinear damping and a logarithmic source term.
\newblock {\em Electronic Research Archive}, 28(1):263--289, 2020.

\bibitem{Mohammad_2019}
Mohammad M.Al-Gharabli, Adel~M. Al-Mahdi, and Salim~A. Messaoudi.
\newblock General and optiman decay result for a viscoelastic problem with
  nonlinear boundary feedback.
\newblock {\em Journal of Dynamical and Control Systems}, 25:551--572, 2019.

\bibitem{Mohammad_2018-1}
Mohammad M.Al-Gharabli, Aissa Guesmia, and Salim~A. Messaoudi.
\newblock Well-posedness and asymptotic stability results for a viscoelastic
  plate equation with a logarithmic nonlinearty.
\newblock {\em Applicable Analysis}, pages 50--74, 2018.

\bibitem{Mohammad_2018}
Mohammad M.Al-Gharabli and Salim~A. Messaoudi.
\newblock Existence and a general decay result for a plate equation with
  nonlinear damping and a logarithmic source term.
\newblock {\em Journal of Evolution Equations}, 18:105--125, 2018.

\bibitem{Gorka_2009}
Gorka P.
\newblock Logarithmic klein-gordon equation.
\newblock {\em Acta Physica Polonica B}, 40(1):59--66, 2009.

\bibitem{Gorka_2011}
Gorka P., Prado H., and Reyes E.G.
\newblock Nonlinear equations with infinitely many derivatives.
\newblock {\em Complex Analysis and Operator Theory}, 5(1):313--323, 2011.

\bibitem{Messaoudi_2002}
Messaoudi S.A.
\newblock Global existence and nonexistence in a system of petrovsky.
\newblock {\em Journal of Mathematical Analysis and Applications},
  265(2):296--308, 2002.

\bibitem{caze_1980}
Cazenave T. and Haraux A.
\newblock Equations devolution acec non-linearite logarithmique.
\newblock {\em Annales de la faculté des sciences de Toulouse}, 2(1):21--51,
  1980.

\bibitem{komornik_1994}
Komornik V.
\newblock On the nonlinear boundary stabilization of kirchoff plates.
\newblock {\em Nonlinear Differential Equations and Applications}, 1:323--337,
  1994.

\bibitem{young_ineq}
Arnol'd VI.
\newblock {\em Mathematical methods of classical mechanics}, volume~60.
\newblock Springer, Berlin, 2013.

\bibitem{Vladimirov_2005}
Vladimirov V.S.
\newblock The equation of the p-adic open string for the scalar tachyon field.
\newblock {\em Izvestiya: Mathematics}, 69(3):487--512, 2005.

\bibitem{ChenW_2009}
Chen W. and Zhou Y.
\newblock Global nonexistence for a semilinear petrovsky equation.
\newblock {\em Nonlinear Analalysis}, 70(9):3203--3208, 2009.

\bibitem{Liu_2009}
Liu W.
\newblock General decay rate estimate for a viscoelastic equation with weakly
  nonlinear time-dependent dissipation and source terms.
\newblock {\em Journal of Mathematical Physics}, 50(11):113506 (17pp), 2009.

\end{thebibliography}

		\end{document}